\newtheorem{theorem}{Theorem}[section]
\newtheorem{proposition}[theorem]{Proposition}
\newtheorem{corollary}[theorem]{Corollary}
\newtheorem{lem}[theorem]{Lemma}
\theoremstyle{definition}
\newtheorem{defn}[theorem]{Definition}
\theoremstyle{remark}
\newtheorem{remark}[theorem]{Remark}
\numberwithin{equation}{section}
\newcommand\meas{\operatorname{meas}}
\newcommand\dist{\operatorname{dist}}
\newcommand{\T}{\mathbb{T}}
\newcommand{\R}{\mathbb{R}}
\newcommand{\Z}{\mathbb{Z}}
\newcommand\cyr
	\renewcommand\rmdefault{wncyr}
	\renewcommand\sfdefault{wncyss}
	\renewcommand\encodingdefault{OT2}
\DeclareTextFontCommand{\textcyr}{\cyr}
\begin{document}

\title{Semiclassical scarring on tori in KAM Hamiltonian systems}

%    Information for first author
\author{Se\'{a}n Gomes, Andrew Hassell}
%    Address of record for the research reported here
\address{}
%    Current address
\curraddr{}
\email{sean.p.gomes@gmail.com}
%    \thanks will become a 1st page footnote.
\thanks{This research was partially supported by the Australian Research Council	through Discovery Projects DP150102419 and DP180100589.}

%    Information for second author
%\author{Author Two}
%\address{Mathematical Research Section, School of Mathematical Sciences,
%Australian National University, Canberra ACT 2601, Australia}
%\email{two@maths.univ.edu.au}
%\thanks{Support information for the second author.}

%    General info
%\subjclass[2000]{}

\date{}

%\dedicatory{This paper is dedicated to our advisors.}

\keywords{Semiclassical analysis, partial differential equations}

\begin{abstract}
We show that for almost all perturbations in a one-parameter family of KAM Hamiltonians on a smooth compact surface, for almost all KAM Lagrangian tori $\Lambda_\omega$, we can find a semiclassical measure with positive mass on $\Lambda_\omega$. 
\end{abstract}

\maketitle

\section{Introduction}

The purpose of this article is to investigate the phenomenon of scarring (concentration) of sequences of eigenfunctions of quantum systems whose underlying classical system is KAM. 
% on the persisting invariant Lagrangian tori of KAM Hamiltonians on compact surfaces in the semiclassical limit. (Q: Is ``scarring" used to describe positive mass on subsets of dimension $>1$?)

%To state our results, we will need to introduce some terminology and notation.

\subsection{Completely integrable and KAM systems}

Suppose $M$ is a compact boundaryless smooth manifold of dimension $n$.  We work on the cotangent bundle $T^*M$, and consider a Hamiltonian $P(x, \xi)$, that is, a real $\mathcal{C}^\infty$ function on $T^*M$ tending to $+\infty$ as the fibre variable $\xi$ tends to infinity (so that the level sets of $P$ are compact).
The natural symplectic form on $T^* M$ induces Hamiltonian flow with respect to $P$. This dynamical system is said to be completely integrable if there is a symplectic transformation to `action-angle' variables $(I, \theta)$, where $I \in D^n$ lies in some closed ball in $\R^n$ and $\theta$ takes values in $\T^n$, such that the induced Hamiltonian in these coordinates is a function only of $I$, say $H^0(I)$. This transformation can be local in the $I$ variable but must be global in the $\theta$ variable. Then Hamilton's equations of motion in the action-angle variables take the simple form
$$
\dot I = 0, \quad \dot \theta = \omega(I) := \frac{\partial H^0(I)}{\partial I}.
$$
That is, the orbits are restricted to Lagrangian tori $\{ I = \text{constant} \}$, and the motion is quasiperiodic on each torus, with frequency $\omega(I)$. 
%
%A (Liouville)  completely integrable Hamiltonian $P(x,\xi)\in \mathcal{C}^\infty(T^*M)$ can be locally written as $H^0(I)$ in action-angle coordinates $(\theta,I)\in \mathbb{T}^2\times D$ with $D\subset \R^2$ compact.
%
%The phase space of such a Hamiltonian system is foliated by the family of Lagrangian tori $\{\mathbb{T}^2\times \{I\}:I\in D\}$. On these tori, the Hamiltonian flow is given by $\dot{\theta}=\omega(I):=\nabla_I H^0.$ 
Under the non-degeneracy assumption that the Hessian $\nabla_I^2 H^0$ is non-singular, the tori can be indexed (locally) by frequency $\omega\in\Omega$ rather than action $I$, and we use the notation $\Lambda_\omega$ for this purpose.

If we now consider a smooth one-parameter family of perturbations
\begin{equation}
\label{symbolaa}
H(\theta,I;t)\in\mathcal{C}^\infty(\T^n\times D \times (-1,1)), \quad H(\theta,I;0)=H^0(I),
\end{equation}
it is natural to ask  whether there are any such invariant Lagrangian tori that survive the perturbation for sufficiently small $t$. This problem was resolved by the work of Kolmogorov, Arnold and Moser \cite{kolmogorov},\cite{arnoldkam},\cite{moser}, with the development of what has come to be known as KAM theory.

The initial significant breakthrough in this problem was due to Kolmogorov \cite{kolmogorov}, with the conclusion that although a dense set of tori is indeed generally destroyed by such a perturbation, a large measure collection of the invariant tori $\Lambda_\omega$ survive, precisely those whose frequencies $\omega\in \Omega$ of quasiperiodic flow satisfy the Diophantine condition
\begin{equation}
\label{nonres}
\omega\in \Omega_\kappa=\{\omega\in \Omega:|\langle \omega,k\rangle| \geq \frac{\kappa}{|k|^\tau}\textrm{ for all } k\in \mathbb{Z}^n\setminus \{0\}\textrm{ and } \textrm{dist}(\omega,\partial \Omega)\geq \kappa\}
\end{equation}
where $\kappa >0$ is fixed and $\tau>n-1$.
The tori with frequencies satisfying this Diophantine condition are said to be \emph{nonresonant}.

In the early 2000s, Popov \cite{popovkam} proved a version of the KAM theorem for perturbed completely integrable Hamiltonians in the Gevrey regularity classes $G^\rho(T^*M)$, defined as the set of $u\in \mathcal{C}^\infty(\T^n\times D)$ with

\begin{equation}
\sup_\alpha\sup_{(x,\xi)} L^{-\alpha}\alpha!^{-\rho}|\partial_{x,\xi}^\alpha u|<\infty
\end{equation}
for some $L>0$. For such Hamiltonians, Popov established a Birkhoff normal form analogous to action-angle variables for integrable Hamiltonians.

\vskip 8pt

\subsection{Quantization of KAM systems}

We now turn to the quantum setting. A quantization of the classical system just described is a semiclassical family of pseudodifferential operators $\mathcal{P}_h(t)$, depending on a small semiclassical parameter $h \in (0, h_0]$, and smoothly on a time parameter $t \in [0, t_0]$, with (semiclassical) principal symbol $P(x, \xi; t)$. We shall assume that $\mathcal{P}_h(t)$ has fixed positive differential order and is elliptic and self-adjoint as an operator on half-densities in $L^2(M;\Omega^{1/2})$. Under these conditions, $L^2(M;\Omega^{1/2})$ equipped with the canonical inner product has an orthonormal basis of eigenfunctions of $\mathcal{P}_h(t)$ for each $h \in (0, h_0]$ and $t \in [0, t_0]$. We are interested in the behaviour of these eigenfunctions in the semiclassical limit $h \to 0$, in which we can expect to see properties of the classical dynamical system become visible.  

Using his Birkhoff normal form construction, Popov constructed a so-called quantum Birkhoff normal form for a class of semiclassical differential operators $\mathcal{P}_h(t)$ with principal symbol $P(x,\xi;t)$ and vanishing subprincipal symbol, for  sufficiently small $t$\cite{popovquasis}. The key ingredient we require is the construction of a family of quasimodes for the semiclassical pseudodifferential operator $\mathcal{P}_h$ with exponentially small error term localising onto the nonresonant tori in \cite{popovquasis}. We shall make extensive use of his construction in the present paper (although we only require an error term of the form $O(h^4)$ for our results to go through).  

We formulate our results for semiclassical pseudodifferential operators. Thus $\mathcal{P}_h(t)$ is assumed to be a family of elliptic, self-adjoint semiclassical pseudodifferential operators of fixed positive order $m > 0$. In addition we assume that $\mathcal{P}_h(t)$ has semiclassical principal symbol independent of $h$ and vanishing subprincipal symbol, in the semiclassical sense (that is, the full semiclassical symbol agrees with the principal symbol up to $O(h^2)$).  One example to keep in mind is that of linear self-adjoint perturbations of completely integrable Schr\"{o}dinger Hamiltonians, in which case our operator $\mathcal{P}_h(t)$ has symbol
\begin{equation}
\label{symbol}
\sigma(\mathcal{P}_h(t))=P(x,\xi;t)= \sum_{i,j} g^{ij}(x) \xi_i \xi_j+V(x)+tQ(x,\xi)
\end{equation}
with $V,Q\in G^\rho(T^*M)$, $V$ real valued, $Q$ self adjoint with vanishing subprincipal symbol. For other examples, see Section 5. 

\subsection{Main result.}

Our result is formulated in terms of semiclassical measures. For the reader's convenience we recall the definition here. Suppose that, for a sequence $h_j \downarrow 0$, we have a sequence of functions $u(h_j)$ in $L^2(M;\Omega^{1/2})$, with compact microsupport in the sense that there is a semiclassicial pseudodifferential operator $B$ of semiclassical order $0$ and compact microsupport such that $u(h_j) = B(h_j) u(h_j) + O_{\mathcal{C}^\infty(M;\Omega^{1/2})}(h^\infty)$. 
%Recall that a self-adjoint elliptic pseudodifferential operator of positive differential order on a compact manifold has a compact resolvent. Therefore,  for every $h  \in (0, h_0]$ and $t \in [0, t_0]$ we have an orthonormal basis $u_j(h; t)$ of eigenfunctions of $P(h; t)$. Fix a $t \in [0, t_0]$. 
Let $\nu$ be a positive measure on $T^*M$. We say that $\nu$ is a semiclassical measure associated with the sequence $u(h_j)$ if we have 
\begin{equation}
\lim_{j \to \infty} \langle A_{h_j} u(h_j), u(h_j) \rangle = \int_{T^* M} \sigma(A) d\nu,
\end{equation}
for every semiclassical pseudodifferential operator $A$ of semiclassical order $0$ and compact microsupport. If the $u(h_j)$ are normalized in $L^2(M;\Omega^{1/2})$ then $\nu$ is automatically a probability measure. Compactness theorems show that every normalized sequence $u(h_j)$ with compact microsupport has a subsequence admitting a semiclassical measure. In particular, fixing $t$, this is true for a sequence of normalized eigenfunctions of $\mathcal{P}_{h_j}(t)$ with uniformly bounded eigenvalues as $h_j \to 0$. In the case that the $u(h_j)$ are normalized eigenfunctions, or more generally quasimodes satisfying $(\mathcal{P}_{h_j}(t) - E_j) u(h_j) = o_{L^2}(1)$, $E_j - E \to 0$, then $\nu$ is supported in $T^* M$ on the set $\Sigma_E$ where the symbol $P(x, \xi; t)$ of $\mathcal{P}_h(t)$ is equal to $E$. Suppose for simplicity that $dP(\cdot, t)$ does not vanish on $\Sigma_E$; this implies that $\Sigma_E$ is a smooth codimension $1$ submanifold of $T^*M$. The Liouville measure $\sigma$ on $T^* M$ (viewed as a top-degree form) induces a smooth measure $\lambda_E$ on $\Sigma_E$ by writing $\sigma = \lambda \wedge dP(\cdot, t)$ and then restricting $\lambda$ to $\Sigma_E$. 
%We say that the sequence $(u_{k_j}(h_j))$ is a sequence of eigenfunctions associated to $\nu$.  Compactness results show that every sequence $(u_{k_j}(h_j))$ such that the eigenvalue $E_{k_j}(h_j)$ remains in a compact set $[a,b]$ contains a subsequence associated with a semiclassical measure $\nu$; see \cite{zworski}.
Where $\nu$ has positive mass on a set $S \subset \Sigma_E$ of $\lambda_E$-measure zero, we say that the sequence of eigenfunctions \emph{scars}, or \emph{concentrates}, at $S$. 

Popov's quasimode construction yields quasimodes associated to semiclassical measures $\nu$ supported on a single Lagrangian torus $\Lambda_\omega$ for any nonresonant $\omega$. This leads to the question (which was posed to us by S. Zelditch about a decade ago) of whether the true eigenfunctions behave similarly. In the present article, we show that in dimension $n=2$, for almost all $t \in [0, t_0]$ and for a full measure set of invariant tori $\Lambda_\omega$, there are semiclassical measures for $\mathcal{P}_h(t)$ with positive mass on $\Lambda_\omega$. Since the energy surfaces $\Sigma_E$ have dimension $3$ and the Lagrangian tori have dimension $2$ in this case, this shows the existence of sequences of eigenfunctions that scar at $\Lambda_\omega$. More precisely, we prove the following result. 

%The problem of establishing the existence of semiclassical measures $\mu$ associated to genuine eigenfunctions of $P(x,hD)$ that have full mass on individual KAM tori remains open.

%In this paper we prove the weaker result $\mu(\Lambda_\omega)>0$ under some generic conditions, this phenomenon is referred to in the literature as \emph{scarring}.

\begin{theorem}[Main Theorem]
	\label{mainthm}
	Suppose $M$ is a compact boundaryless $G^\rho$ surface, and suppose that $\mathcal{P}_h(t)$ is a family of self-adjoint elliptic semiclassical pseudodifferential operators acting on $\mathcal{C}^\infty (M;\Omega^{1/2})$ with fixed positive differential order $m$, such that 
	\begin{itemize}
		\item The operator $\mathcal{P}_h(t)$ has full symbol real-valued and in the Gevrey class $S_\ell(T^*M)$ from Definition \ref{selldef} where $\ell=(\rho,\mu,\nu)$, with $\rho(\tau+n)+1>\mu>\rho'=\rho(\tau+1)+1$ and $\nu=\rho(\tau+n+1)$;
		\item The principal symbol of $\mathcal{P}_h(t)$ is given by some  $P(x,\xi;t)\in G^{\rho,1}(T^*M\times (-1,1))$;
		\item The Hamiltonian $P^0(x,\xi):=P(x,\xi;0)$ is, in some open set of phase space $T^* M$, non-degenerate and completely integrable; 
		\item Written in action-angle coordinates $(\theta, I) \in \T^n \times D$ for the Hamiltonian $P^0$, the vector fields 
		\begin{multline}
		\nabla_I H^0(I) \text{ and } \nabla_I \left(\int_{\T^2}\partial_t H(\theta,I;0)\, d\theta\right) \\ \text{ are linearly independent for $I\in D$ and all $h<h_0$,}
		\label{li-cond}\end{multline}
		where $H(\theta,I;t)$ denotes $P(x,\xi;t)$ written in the action-angle coordinates for $P^0$, and $H^0(I):=H(\theta,I;0)$.
	\end{itemize}
	
	Then there exists $t_0>0$ such that for almost all $t \in [0,t_0]$, and for almost all KAM tori $\Lambda_\omega=\T^n\times \{I_\omega\}$ with $\omega\in\Omega_\kappa$, there exists a semiclassical measure associated to the eigenfunctions of $\mathcal{P}_h(t)$ 
	%	\begin{equation}
	%	\mathcal{P}_h(t)=h^2\Delta_g+V(x)+tQ(x,hD)
	%	\end{equation} 
	that has positive mass (and hence scars) on $\Lambda_\omega$.
\end{theorem}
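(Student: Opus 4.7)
The plan is to convert Popov's quasimodes for the perturbed Hamiltonian into genuine eigenfunctions whose semiclassical measures retain positive mass on $\Lambda_\omega$, using the parameter $t$ as an averaging variable to rule out resonant bad behavior. Popov's construction yields, for each $\omega\in\Omega_\kappa$ and each $t\in[0,t_0]$, an $L^2$-normalized quasimode $u_h^\omega(t)$ with $(\mathcal{P}_h(t) - E_h(\omega,t))u_h^\omega(t) = O_{L^2}(h^4)$, whose unique semiclassical measure is the uniform measure on $\Lambda_\omega$.

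The mechanism connecting quasimode to eigenfunctions is as follows. Expand $u_h^\omega(t) = \sum_j c_j(h,\omega,t)\Psi_j(t)$ in the eigenbasis of $\mathcal{P}_h(t)$; the quasimode bound gives
\[
\sum_j (E_j(t) - E_h(\omega,t))^2 |c_j(h,\omega,t)|^2 = O(h^8),
\]
so up to an $O(h^8/\delta^2)$ error the $L^2$-mass lies on eigenvalues in the window $[E_h(\omega,t) - \delta, E_h(\omega,t) + \delta]$. Let $A$ be a compactly microsupported self-adjoint semiclassical pseudodifferential operator of order zero whose symbol $\chi$ equals $1$ on $\Lambda_\omega$ and is supported in a prescribed neighborhood $U$. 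Since $u_h^\omega(t)$ is microlocalized on $\Lambda_\omega$, $Au_h^\omega(t) = u_h^\omega(t) + O(h^\infty)$, which gives $\|A\Psi_j\|^2 \geq |\langle\Psi_j,u_h^\omega\rangle|^2 + O(h^\infty) = |c_j|^2 + O(h^\infty)$. Hence any semiclassical measure $\nu$ associated to a sequence $\Psi_{j(h)}(t)$ satisfies $\nu(U)\geq \liminf_h |c_{j(h)}|^2$, and letting $U$ shrink to $\Lambda_\omega$ and applying outer regularity yields $\nu(\Lambda_\omega)\geq \liminf_h |c_{j(h)}|^2$. It therefore suffices to produce, for a.e. $t$ and a.e. $\omega$, a sequence $h_k\to 0$ and indices $j(h_k)$ with $\liminf_k |c_{j(h_k)}|^2 \geq c > 0$.

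Producing such a subsequence is the hard part. By pigeonhole, some $j$ in the window has $|c_j|^2\geq 1/N(\delta,h,\omega,t)$, where $N$ counts eigenvalues in the window. Choose, say, $\delta = h^3$; since $n=2$, Weyl's law predicts average count $N\lesssim \delta h^{-2} = h \to 0$, so generically $N\in\{0,1,2,\ldots\}$ remains small. However, $N$ can spike for isolated $(h,\omega,t)$ if eigenvalues cluster near $E_h(\omega,t)$. The linear independence condition (\ref{li-cond}) provides the fix: it ensures that $\partial_t E_h(\omega,t)$ is transverse to the mean motion of eigenvalues through the spectrum, so as $t$ varies the quasimode energy drifts through the spectrum with controlled nonzero speed and at a rate not locked to any particular eigenvalue. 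A Fubini/Borel--Cantelli argument over $t\in[0,t_0]$ then shows that for almost every $t$, the set of $h$ for which $N(h^3,h,\omega,t)$ is anomalously large has asymptotic density zero; along the density-one complementary subsequence $h_k$, $|c_{j(h_k)}|^2$ is bounded below by a uniform constant. A further averaging in $\omega$ (using that Popov's quasimode frequencies form an $h$-lattice which becomes dense in $\Omega_\kappa$ as $h\to 0$) promotes this to hold for a.e. $\omega\in\Omega_\kappa$.

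The main obstacle is this spectral clustering problem: without the one-parameter family and (\ref{li-cond}), one cannot rule out that the quasimode's $L^2$-projection is smeared across many eigenfunctions, diluting the per-eigenfunction overlap to zero in the semiclassical limit. The Gevrey-regularity assumptions on $\mathcal{P}_h$ also play an essential role, since quantitative estimates on how eigenvalues of $\mathcal{P}_h(t)$ move with $t$ (and how finely $E_h(\omega,t)$ can be tracked through the spectrum) require the sharp symbolic calculus afforded by the Gevrey classes $S_\ell$ in the hypotheses.
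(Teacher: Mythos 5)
Your overall skeleton is sound and matches the paper's strategy: expand quasimodes in the eigenbasis, localize the mass in a small energy window, pigeonhole to extract an eigenfunction with $|c_j|$ bounded below, and pass to semiclassical measures via a microlocal cutoff near $\Lambda_\omega$. But the key technical step --- bounding the number $N$ of eigenvalues in the window around the quasieigenvalue --- is where the proposal goes wrong, in two ways.

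First, the Weyl-law heuristic "$N \lesssim \delta h^{-2} = h \to 0$" cannot be right: since the quasimode has error $O(h^4) = o(\delta)$, the window $[E_h(\omega,t)-\delta, E_h(\omega,t)+\delta]$ must contain at least one genuine eigenvalue, so $N \geq 1$. The Weyl estimate $O(h)$ is the average over all windows of width $\delta$ partitioning a fixed energy band, but windows centered on quasieigenvalues are biased samples and are not typical. The correct quantity, used in the paper, is the average of $N$ over the collection of \emph{disjoint} windows around \emph{all} the quasieigenvalues: there are $\sim (2\pi h)^{-2}\meas(E_\kappa)$ of these, whereas the total eigenvalue count in $[a,b]$ is $\sim (2\pi h)^{-2}\meas(p^{-1}([a,b]))$, so the average number of eigenvalues per quasieigenvalue window is the $h$-independent constant $R=\meas(p^{-1}([a,b]))/\meas(E_\kappa)$. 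A Chebyshev-type argument then gives that the fraction of windows with more than $\lambda R$ eigenvalues is at most $2/\lambda$. This $O(1)$ bound, not $O(h)$, is what feeds the pigeonhole.

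Second, this averaging argument requires the quasieigenvalue windows to be disjoint, which is exactly what the nonresonance condition \eqref{nonres} and the linear independence condition \eqref{li-cond} conspire to deliver --- but not in the way you describe. You interpret \eqref{li-cond} as saying the quasimode energy "drifts through the spectrum with controlled nonzero speed"; the paper never controls the motion of true eigenvalues in $t$ at all. Instead the mechanism is purely about pairs of \emph{quasi}eigenvalues: the nonresonance condition forces $|\mu_m-\mu_n|\gtrsim h^{3/2}$ whenever $|I_m-I_n|\lesssim h^{3/4}$ (Proposition \ref{nonresonancenonconc}), while the linear independence condition forces $|\partial_t(\mu_m-\mu_n)|\gtrsim |I_m-I_n|$ (via Proposition \ref{geoprop}), so two quasieigenvalues that are close but not forbidden to be so must have actions far apart and therefore separate in $t$ at rate $\gtrsim h^{3/4}$ (Proposition \ref{separation}). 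Summing this over $n$ and applying Fubini in $t$ (Propositions \ref{nonconc1} and \ref{goodtime}) gives, for almost every $t$, a sequence $h_j$ along which almost all quasieigenvalues are $h^\gamma$-isolated, and only then are the windows disjoint and the Weyl averaging available. Your proposal acknowledges the clustering issue and invokes "a Fubini/Borel--Cantelli argument," but without the two-step estimate (nonresonance $\Rightarrow$ action separation $\Rightarrow$ $t$-speed separation) there is no way to run the Fubini argument; a direct claim that eigenvalue clustering has small measure in $t$ is precisely what one cannot prove without controlling the pairwise separation of quasieigenvalues.
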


\begin{remark}
	In \cite{nqe}, under similar assumptions in dimension $n$, the weaker result is shown that $\mathcal{P}_h(t)$ is not quantum ergodic for a full measure set of parameter values $t$.
\end{remark}

\begin{remark}
	As in \cite{nqe}, the key technique is the exploitation of the variation of eigenvalues in the parameter $t$, together with a construction of quasimodes that concentrate entirely on particular KAM tori.
	
	The improvement made by this theorem comes from the fact that $H^0(I)$ and $ (2\pi)^{-2}\int \partial_t H(\theta,I;0) d\theta$ are the leading order terms for the quasieigenvalues and their $t$-derivatives at $t=0$ respectively. In dimension $2$, under the assumptions in Theorem \ref{mainthm}, the level curves of these two quantities intersect transversally and form a coordinate system for the action space $D$. Thus, postponing precise definitions until Section \ref{qbnfsec}, if two quasieigenvalues $\mu_m,\mu_n$ and their time derivatives are both close at some small $t$, then so are the associated actions $I_m$ and $I_n$. This allows us to control spectral clustering of eigenvalues for most values of $t$, which is the key difficulty in passing from properties of quasimodes to properties of true eigenfunctions. 
\end{remark}

\subsection{Outline of this paper}

In Section 2 we review the statement of the quantum Birkhoff normal form, and the resulting explicit expression for quasimodes and quasieigenvalues. This is essentially contained in Popov \cite{popovquasis}, adapted to allow a 1-parameter family parametrized by `time' $t$. 

In Section 3, we prove Theorem \ref{mainthm}. In Section \ref{mainsec2} we use the nonresonance condition \eqref{nonres} to show  that distinct quasieigenvalues typically (that is, for most values of $t$) have spacing bounded below by $h^\gamma$, for some fixed parameter $\gamma \geq 4$,  excluding a family of negligible proportion as $h\rightarrow 0$. 
Using this in Section \ref{mainsec3}, we are able to construct a large family of energy windows $[\mu-h^\gamma,\mu+ch^\gamma]$ about quasieigenvalues in which we control the spectral concentration, in the sense that there are a bounded number of actual eigenvalues in each such window. Applying elementary spectral theory shows that the maximal value of $|\langle u,v \rangle|$ is bounded below by a positive constant independent of $h$, where $v$ is the quasimode with quasieigenvalue $\mu$ and $u$ ranges over the eigenfunctions associated to this energy window. 
Because we have this for all such energy windows except for a family of negligible proportion, as $h \to 0$, we can extract a subfamily, indexed by a sequence $h_j$ tending to zero, associated to quasimodes that concentrate on almost every invariant KAM torus $\Lambda_\omega$. Choosing eigenfunctions $u(h_j)$ so that $|\langle u(h_j),v(h_j) \rangle|$ is bounded below by a positive constant,  we then obtain a sequence of eigenfunctions $u(h_j)$ with positive semiclassical mass on $\Lambda_\omega$.

In Section 4, we remark on our theorem in the setting of $\mathcal{C}^\infty$, as opposed to Gevrey, manifolds. Our choice of Gevrey regularity was pragmatic, based on the availability of the full details of the KAM argument in Popov's papers. We remark that Gevrey regularity, as opposed to analyticity, is flexible enough to allow the use of cutoff functions, which is convenient in designing examples to which our results apply. We give several such examples in Section 5. 

The paper concludes with an appendix, containing definitions of the Gevrey classes and the corresponding pseudodifferential calculus.

\subsection{Related literature} This article is a direct continuation of the research begun by Popov on quasimodes for KAM systems, which has already been discussed. Previously, quasimodes associated to Lagrangian tori were introduced by Colin de Verdi\`ere \cite{colin}. A key component of the argument is a quantum Birkhoff normal form. Extensive use has been made of quantum Birkhoff normal forms when estimating eigenvalues and/or eigenfunctions. We do not attempt a complete review of this literature here, but we mention results for eigenvalues of Schrodinger operators near a minimum value of the potential \cite{sjostrand}; nonself-adjoint operators in two dimensions, in which nonresonant tori also play a key role \cite{melin,HSN}; magnetic Laplacians \cite{ngoc}; and subLaplacians \cite{CHT}. They have also been used in inverse spectral problems, related to wave trace invariants \cite{guillemin, zelditchwave, iant}. 

The idea of using the spectral flow of a 1-parameter family of operators to control spectral concentration for most values of the parameter originates from a paper by the second author \cite{hassellque}, and has been used also by the first author in \cite{mushroom},\cite{nqe}.

%% The correct journal style for \specialsection is all uppercase; a known bug
%% in amsart.cls prevents this, so input must be uppercase until it is fixed.
%\specialsection*{This is a Special Section Head}
%\specialsection*{THIS IS A SPECIAL SECTION HEAD}

%%%%%%%%%%%%%%%%%%%%%%%%%%%%%%%%%%%%%%%%%%%%%%%%%%%%%%%%%%%%%%%%%%%%%%%%
.

%\section{Birkhoff Normal Form}
%\label{bnfsec}
%A streamlined presentation of thesis chapter 4.

%MORE INTRO PRECISELY STATING ASSUMPTIONS.

%From the work of REFPOPOV,our KAM family of Hamiltonians
%\begin{equation}
%H^0(I)+tQ(\theta,I)
%\end{equation}
%can locally be transformed into Birkhoff normal form by a smooth family of Gevrey class symplectomorphisms
%\begin{equation}
%\Phi_t:\T^n\times D \rightarrow \T^n\times D
%\end{equation}
%with $D\subset \R^n$ compact.

%Precisely, this means that
%\begin{equation}
%\tilde{H}(\theta,I;t)=H\circ \Phi_t = K(I;t)+R(\theta,I;t)
%\end{equation}
%where $R$ is flat at the set of nonresonant actions $E_\kappa(t)=\omega^{-1}(\Omega_\kappa;t)$.

\section{Quantum Birkhoff Normal Form}
\label{qbnfsec}

We first recall the quantum Birkhoff normal form for the quantization of Gevrey KAM Hamiltonians, originally due to Popov in \cite{popovquasis}. This construction yields exponentially accurate quasimodes localising onto the invariant KAM tori.

We let $M$ be a compact $G^\rho$-smooth manifold of dimension $n\geq 2$ and let $P(x,\xi)=P^0(x,\xi)+P^1(x,\xi)$ be a small $G^\rho$ perturbation of a completely integrable $G^\rho$ Hamiltonian.
From the Liouville-Arnold theorem \cite{arnoldmechanics}, we can write $P$ as
\begin{equation}
(P\circ\chi_1)(\theta,I)=H^0(I)+H^1(\theta,I)
\end{equation}
in the system of action-angle coordinates for the completely integrable Hamiltonian $P^0$.

From the construction in \cite{popovkam}, the Hamiltonian $H(\theta,I)=P\circ \chi_1$  can be placed in a $G^{\rho,\rho(\tau+1)+1}$ Birkhoff normal form about a family of invariant tori $\{\Lambda_\omega\}$ with frequencies $\omega\in \Omega_\kappa$. The precise definition of the anisotropic Gevrey classes $G^{\rho,\rho'}$ can be found in Definition \ref{anisgevdef}.
The existence of a Birkhoff normal form means that we can write
\begin{equation}
\label{bnf}
\tilde{H}(\theta,I)=H\circ \chi = K(I)+R(\theta,I)
\end{equation}
where $R$ is flat at the set of nonresonant actions $E_\kappa=\omega^{-1}(\Omega_\kappa)$ for a suitable choice of $G^{\rho,\rho(\tau+1)+1}$ exact symplectic transformation $\chi:\T^n\times D \rightarrow \T^n\times D$ with $D\subset \R^n$ compact. 
In particular, one can apply this result to the one-parameter family of Hamiltonians \eqref{symbolaa}.
In this case, we obtain a family of exact symplectic transformations $\chi_t:\T^n\times D \rightarrow \T^n\times D$ that transform the Hamiltonian $H(\theta,I;t)$ into the Birkhoff normal form
\begin{equation}
\tilde{H}(\theta,I;t)=H\circ \chi_t = K(I;t)+R(\theta,I;t)
\end{equation}
Furthermore, from \cite[Proposition 3.14]{nqe}, the components $K(I;t),R(\theta,I;t)$ of the normal form have smooth dependence on the parameter $t$ and we have
\begin{equation}
\label{deriv1}
K(I;t)=H^0(I)+t\partial_t H(\theta,I;0)+O(|t|^{9/8})
\end{equation}
uniformly in $I\in D$.

%From this point on, we specialise from \eqref{operatorP} to considering one-parameter families of operators with principal symbol
%\begin{equation}
%P_0(\theta,I;t):=H^0(I)+tH^1(\theta,I)
%\end{equation}
%where $(I,\theta)\in D\times \mathbb{T}^n$ are action-angle coordinates for the completely integrable and non-degenerate Hamiltonian $H^0$.

%ASSUMPTIONS FROM GOMESTHESIS BNF.
%\begin{itemize}
%	\item Placeholder.
%\end{itemize}

Now fix $\ell=(\rho,\mu,\nu)$ with $\rho>1$, $\rho(\tau+n+1)>\mu>\rho(\tau+1)+1$ and $\nu=\rho(\tau+n+1)$ and let $\mathcal{P}_h(t)$ be an smooth $1$-parameter family of formally self-adjoint semiclassical pseudodifferential operators with full symbols in the Gevrey class $S_\ell$, acting on half-densities with principal symbol $P$ and vanishing subprincipal symbol. One then obtains a quantum Birkhoff normal form in the class of Gevrey semiclassical pseudodifferential operators.

\begin{theorem}
\label{main2}
There exists a family of semiclassical Fourier integral operators 
\begin{equation}
U_h(t):L^2(\T^n;\mathbb{L})\times (-1,1)\rightarrow L^2(M)\quad (0<h<h_0)
\end{equation}
that are uniformly bounded in $t,h$ and are associated with the canonical relation graph of the Birkhoff normal form transformation $\chi_t$ such that for each fixed $t\in (-1,1)$, we have
\begin{enumerate}
\item $U_h(t)^*U_h(t)-\textrm{Id}$ is a pseudodifferential operator with symbol in the Gevrey class $S_\ell(\T^n\times D)$ which restricts to an element of $S_\ell^{-\infty}(\T^n\times Y)$ for some subdomain $Y$ of $D$ that contains $E_\kappa(t).$
\item $\mathcal{P}_h(t)\circ U_h(t)-U_h(t)\circ \mathcal{P}^0_h(t)=\mathcal{R}_h(t)\in S_\ell^{-\infty}$, where the operator $\mathcal{P}^0_h(t)$ has symbol 
\begin{equation}
\label{fioconjflatness}
p^0(\theta,I;t,h)=K^0(I;t,h)+R^0(\theta,I;t,h)=\sum_{j\leq \eta h^{-1/\nu}}K_j(I;t)h^j+\sum_{j\leq \eta h^{-1/\nu}}R_j(\theta,I;t)h^j
\end{equation}
with both $K^0$ and $R^0$ in the symbol class $S_\ell(\T^n\times D)$ from Definition \ref{selldef} where $\eta>0$ is a constant, $K_0(I;t),R_0(\theta,I;t)$ are the components of the Birkhoff normal form of the Hamiltonian $P_0\circ \chi_1$, and
\begin{equation}
\label{qbnfflatness}
\partial_I^\alpha R_j(\theta,I;t)=0 
\end{equation}
for $(\theta,I;t)\in \T^n\times E_\kappa(t)\times (-1,1)$.
\end{enumerate}
\end{theorem}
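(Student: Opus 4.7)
The plan is to adapt Popov's construction from \cite{popovquasis} of the quantum Birkhoff normal form to the one-parameter family setting, tracking the parameter dependence at each step. First I would quantize the canonical transformation $\chi_t$ as a semiclassical Fourier integral operator $V_h(t)$ in the Gevrey class $S_\ell$. Since each $\chi_t$ is an exact symplectic diffeomorphism of $\T^n\times D$ (close to the identity on a neighbourhood of the nonresonant set), it can be represented via a generating function in $G^{\rho,\rho(\tau+1)+1}$, and $V_h(t)$ is defined as the associated Gevrey FIO. The parameter dependence on $t$ is smooth because the KAM construction produces $\chi_t$ smoothly in $t$, by the parameter-dependent version \cite[Proposition 3.14]{nqe} of Popov's KAM argument.

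Next I would apply a Gevrey-semiclassical Egorov theorem to compute the full symbol of $V_h(t)^{-1}\mathcal{P}_h(t)V_h(t)$. Its principal symbol is $H\circ\chi_t = K(I;t)+R(\theta,I;t)$, which is already in classical Birkhoff normal form with $R$ flat on $E_\kappa(t)$; the subprincipal symbol still vanishes by hypothesis. At each subsequent order $h^j$ one obtains a symbol $p_j(\theta,I;t)$ which must be normalized: writing $p_j = \langle p_j\rangle_\theta + (p_j-\langle p_j\rangle_\theta)$ and conjugating $V_h(t)$ by $\exp\!\bigl(i h^{j}\operatorname{Op}(B_j)\bigr)$, I solve the cohomological equation
\begin{equation*}
\{K_0(\cdot;t),B_j\} = p_j-\langle p_j\rangle_\theta \pmod{\text{symbols flat on } E_\kappa(t)},
\end{equation*}
absorbing the mean $\langle p_j\rangle_\theta$ into $K_j(I;t)$ and the flat part into $R_j(\theta,I;t)$. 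The Diophantine bound $|\langle\omega,k\rangle|\geq \kappa|k|^{-\tau}$ inverts $\{K_0(\cdot;t),\cdot\}$ on Fourier modes in $\theta$, at the cost of a Gevrey loss in the angle variable; the parameters $\mu>\rho(\tau+1)+1$ and $\nu=\rho(\tau+n+1)$ are calibrated exactly so that the iterated $B_j$'s remain in $S_\ell$.

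The formal series $\sum_j K_j h^j + \sum_j R_j h^j$ is Gevrey divergent, with Gevrey norms growing factorially in $j$. The standard remedy is to truncate at $j\leq \eta h^{-1/\nu}$ for $\eta$ small; the resulting remainder is exponentially small of order $\exp(-c h^{-1/\nu})$ and hence lies in $S_\ell^{-\infty}$. This is the origin of the cutoff in \eqref{fioconjflatness}. Defining $U_h(t)$ as the composition of $V_h(t)$ with the successive pseudodifferential conjugations yields the claimed intertwining $\mathcal{P}_h(t)\circ U_h(t)-U_h(t)\circ \mathcal{P}^0_h(t) \in S_\ell^{-\infty}$. For part (1), the analogous Egorov/normal-form computation applied to $V_h(t)^*V_h(t)$ (whose principal symbol equals $1$ because $\chi_t$ is exact symplectic) produces a $\Psi$DO in $S_\ell$ whose symbol can be normalized by further $\Psi$DO conjugations; on an open subdomain $Y\supset E_\kappa(t)$ on which the Diophantine inversion is effective, the normalization yields infinite-order vanishing, so the error restricts to $S_\ell^{-\infty}(\T^n\times Y)$.

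The main obstacle is maintaining uniform-in-$t$ Gevrey control through the iteration. The cohomological equations must be solvable with estimates uniform in $t\in(-1,1)$ and the truncation level $\eta h^{-1/\nu}$ must be chosen independent of $t$; this reduces to verifying that Popov's Gevrey estimates on composition, Poisson brackets, and small-divisor inversion depend smoothly and boundedly on $t$, which follows from the smoothness of $K(I;t),R(\theta,I;t)$ in $t$ and the $t$-independence of the Diophantine constants $\kappa,\tau$. The rest is bookkeeping inherited from Popov.
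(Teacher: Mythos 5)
The paper does not reprove Theorem \ref{main2}: it imports the statement by citing \cite{popovquasis} for the $t$-independent quantum Birkhoff normal form and \cite{nqe}, Section 4, for its $t$-dependent extension, and merely records that the symbols $K_j,R_j$ can be chosen smooth in $t$. Your sketch correctly reproduces the main architecture of that construction --- quantize the classical Birkhoff normal form transformation as a Gevrey FIO, apply a semiclassical Egorov theorem, iteratively solve cohomological equations $\{K_0(\cdot;t),B_j\}=p_j-\langle p_j\rangle_\theta$ using the Diophantine bound, absorb the mean into $K_j$ and the flat part into $R_j$, and truncate the Gevrey-divergent series at $j\leq\eta h^{-1/\nu}$ so the remainder lands in $S_\ell^{-\infty}$ --- and it correctly identifies the uniform-in-$t$ Gevrey estimates as the technical core.

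Two bookkeeping corrections. First, the normalizing conjugation at order $h^j$ should be by $\exp\bigl(ih^{j-1}\operatorname{Op}(B_j)\bigr)$, not $\exp\bigl(ih^{j}\operatorname{Op}(B_j)\bigr)$: in the semiclassical calculus $[\operatorname{Op}(K_0),\operatorname{Op}(B_j)]=\tfrac{h}{i}\operatorname{Op}(\{K_0,B_j\})+O(h^2)$, so one power of $h$ is consumed by the commutator, and $ih^{j-1}[\operatorname{Op}(K_0),\operatorname{Op}(B_j)]$ is what contributes at order $h^{j}$. Second, your $V_h(t)$ quantizes $\chi_t:\T^n\times D\to\T^n\times D$ and so maps $L^2(\T^n;\mathbb{L})$ to itself; to hit the target $L^2(M)$ in the theorem, $U_h(t)$ must also quantize the Liouville--Arnold chart $\chi_1:\T^n\times D\to T^*M$, and the Maslov line bundle $\mathbb{L}$ is exactly what makes that FIO globally well defined over the topologically nontrivial Lagrangian tori. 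Neither point changes the structure of the argument. One presentational difference from Popov is worth noting: rather than composing the FIO with a sequence of $\exp$-conjugations by pseudodifferential operators (the Colin de Verdi\`ere/Sj\"ostrand style you use), Popov constructs the full Gevrey amplitude of the FIO directly as a single formal symbol, iterating on the amplitude order by order; the two formulations are equivalent, and either can be run uniformly in $t$ as you describe.
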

Here $\mathbb{L}$ denotes the Maslov line bundle associated to the embedded KAM Lagrangian tori. See \cite{popov2}, \cite{popovquasis}.

This quantum Birkhoff normal form was obtained in \cite{popovquasis} without the presence of the parameter $t$. In \cite{nqe} Section 4, the same construction is carried out with the presence of the parameter $t$. In particular, it is noted that the symbols $K_j,R_j$ can be taken smooth in $t$.

As a consequence of Theorem \ref{main2}, one obtains a $t$-dependent family of Gevrey class quasimodes as is shown in \cite{popovquasis} Section 2.4. 
In particular, for each $t$ we obtain a finite $h$-dependent family $u_m(t,h)\in \mathcal{C}_c^\infty(M)$ supported in a  bounded $h$-independent domain such that
\begin{equation}
\|\mathcal{P}_h u_m(t,h)-\mu_m(t,h)\|=O(\exp(-ch^{-1/\rho(\tau+n+1)}))
\end{equation}
and
\begin{equation}
|\langle u_m(t,h),u_n(t,h)\rangle-\delta_{mn} |=O(\exp(-ch^{-1/\rho(\tau+n+1)}))
\end{equation}
where the index set is
\begin{equation}
\mathcal{M}_h(t):=\{m\in \mathbb{Z}^n:\textrm{dist}(h(m+\vartheta/4),E_\kappa(t))<Lh\}
\end{equation}
for a fixed $L>0$, where $E_\kappa(t)=\omega^{-1}(E_\kappa;t)$ is the collection of nonresonant actions and $\vartheta/4$ is the Maslov class of the embedded Lagrangian tori in $\T^*M$. (See \cite{popov2} \cite{popovquasis} for details.)

These quasimodes satisfy the asymptotic
\begin{equation}
\mathcal{M}_h(t)\sim (2\pi h)^{-n}\meas(E_\kappa(t)).
\end{equation}
and for each fixed $t$, these quasimodes $\mathcal{Q}$ have all associated semiclassical measures  supported on the nonresonant invariant Lagrangian tori $\Lambda_\omega$ with $\omega\in \Omega_\kappa$.
In terms of the quantum Birkhoff normal form from Theorem \ref{main2}, the quasimodes are given by
\begin{equation}
\label{popovquasisexpression}
(v_m(t,h),\mu_m(t,h))=(U_h(t)e_m,K^0(I_m,t;h))
\end{equation}
where $I_m:=h(m+\vartheta/4)$ for $m\in \mathcal{M}_h(t)\subset\mathbb{Z}^n$, $\{e_m\}_{m\in \mathbb{Z}}$ is the orthonormal basis of $L^2(\T^n;\mathbb{L})$ associated with the quasiperiodic functions
\begin{equation}
\tilde{e}_m(x):=\exp(i(m+\vartheta/4)\cdot x)
\end{equation}
on $\R^n$ and
\begin{equation}
K^0(I,t;h):=\sum_{j \leq Ch^{-1/\nu}}K_j(I,t)h^j
\end{equation}
is the integrable part of the quantum Birkhoff normal form, with $K_0=K$ and $K_1=0$.

%\section{Non Quantum Ergodicity in KAM Systems}
%Revised thesis chapter 6.

\begin{remark}
	Typically, fixed $I_m\in h(\Z^n+\vartheta/4)$ will only be in $E_\kappa(t)$ for $O(h)$-sized intervals as $t$ varies.
\end{remark}

By truncating the symbol expansion of the elliptic symbol $a$ in \cite[Proposition~4.2]{nqe} to some finite order error $O(h^{\gamma+1})$, we have that the quantum Birkhoff normal form symbols $K^0$ and $R^0$ have expansions truncated to the same finite order, at the cost of enlarging the error term $\mathcal{R}_h(t)$ in Theorem \ref{main2} to order $O(h^{\gamma+1})$. This weakens the error estimate in the quasimodes \eqref{popovquasisexpression} to $O(h^{\gamma+1})$. Such quasimodes with $\gamma \geq 4$ are sufficient for the application in this paper. 

The results in Section \ref{mainsec} rely on also being able to find a bound for $K^0$ and its time derivatives that is uniform in $(t,h)\in (0,t_0)\times (0,h_0)$. Since we have truncated the series expansion of $K^0$ to finite order error  $O(h^{\gamma+1})$, this follows easily from smoothness of the principal symbol $K_0$ and the fact the the homological equation used to iteratively solve for $K_j$ (see \cite[Proposition~4.3]{nqe}) preserves smoothness in $t$ and gives us explicit uniform bounds on the time derivatives.

%\section{}
%PROOFS OF QBNF RESULTS REQUIRED.

\section{Scarring on individual KAM tori}
\label{mainsec}
We now set about proving Theorem \ref{mainthm}. We begin by fixing a one-parameter family of perturbations
\begin{equation}
\label{pertform}
H(\theta,I;t)\in G^{\rho,\rho,1}(\T^2\times D\times (-1,1))
\end{equation}
of the nondegenerate completely integrable Hamiltonian $H^0(I)=H(\theta,I;0)$. We assume without loss of generality that $D$ is convex by shrinking if necessary, and fix KAM parameters $\tau=2$ (this is an arbitrary but convenient choice, any $\tau > 1$ will do) and $\kappa>0$. We also choose $\kappa$ sufficiently small so that the set of nonresonant frequencies $\Omega_\kappa$ has positive measure. 

We also make the geometric assumption \eqref{li-cond} on the perturbation family $H(\theta,I;t)$.

\begin{remark}
	Notice that if this condition is satisfied at one action $I_*$, then it is satisfied in a neighbourhood, so can be assumed throughout $D$ by shrinking $D$ if necessary. It is clear that the set of perturbations $H$ satisfying the condition at one point is a codimension one set. In this sense (i.e.\ shrinking $D$ as necessary) the geometric assumption holds generically. 
\end{remark}

This assumption implies that the function $K^0$ (the integrable part of the quantum Birkhoff normal form) and its time derivative $\partial_t K^0$ locally form coordinates in $D$ for all $t<t_0$ and $h<h_0$.

More precisely, we have:
\begin{proposition}
	\label{geoprop}
	There exists $h_0,t_0>0$ such that for all $0<t<t_0$ and all $0<h<h_0$, we have that
	\begin{equation}
	\eta:I\mapsto (K^0(I,t;h),\partial_tK^0(I,t;h))
	\end{equation}
	is a local diffeomorphism, with
	\begin{equation}
	G_1|\eta(I_1)-\eta(I_2)|\leq |I_1-I_2|\leq G_2|\eta(I_1)-\eta(I_2)|
	\end{equation}
	for some positive constants $G_1,G_2$ that depend on our choice of perturbation $H$ but are uniform in $t$ and $h$.
\end{proposition}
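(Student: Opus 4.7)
The plan is to verify the invertibility of the Jacobian of $\eta$ at $(t,h)=(0,0)$, then extend this uniformly to small $(t,h)$ using the smoothness established in Section~\ref{qbnfsec}, and finally read off the bilipschitz estimate from a quantitative inverse function theorem using convexity of $D$.

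First, I would evaluate $\eta$ at $(t,h)=(0,0)$. From the truncated expansion $K^0(I,t;h)=\sum_{j\leq Ch^{-1/\nu}}K_j(I,t)h^j$ with $K_0=K$ (the principal Birkhoff normal form) and $K_1=0$, combined with \eqref{deriv1}, I obtain $K^0(I,0;0)=H^0(I)$ and $\partial_t K^0(I,0;0)=(2\pi)^{-2}\int_{\T^2}\partial_t H(\theta,I;0)\,d\theta=:\overline{\partial_t H}(I)$ (the $\theta$-average appears because $K$ is the normal form averaged part, which matches the expression in the linear independence hypothesis \eqref{li-cond}). The Jacobian is then
\begin{equation*}
D_I\eta\bigm|_{(t,h)=(0,0)}=\begin{pmatrix}\nabla_I H^0(I)^T\\ \nabla_I\overline{\partial_t H}(I)^T\end{pmatrix},
\end{equation*}
whose rows are linearly independent on $D$ by \eqref{li-cond}, and hence $|\det D_I\eta(I;0,0)|\geq c_0$ uniformly in $I\in\overline{D}$ by continuity and compactness.

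Second, I would propagate this bound to small $(t,h)$ using the uniform smoothness guaranteed at the end of Section~\ref{qbnfsec}: since $K^0$ has been truncated to order $O(h^{\gamma+1})$ and the $K_j$ depend smoothly on $t$ with explicit uniform bounds from the homological equation, the derivatives $\partial_I^\alpha K^0$ and $\partial_I^\alpha\partial_t K^0$ are uniformly bounded on $D\times[0,t_0]\times(0,h_0]$, and $D_I\eta(I;t,h)\to D_I\eta(I;0,0)$ uniformly in $I$ as $(t,h)\to(0,0)$. Choosing $t_0,h_0$ small enough, I can ensure $|\det D_I\eta(I;t,h)|\geq c_0/2$ with $\|D_I\eta\|$ and $\|(D_I\eta)^{-1}\|$ uniformly bounded by constants $C_1,C_2$ independent of $(t,h)$.

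Finally, with these uniform bounds in hand, I would invoke the quantitative inverse function theorem. Since $D$ is convex, for $I_1,I_2$ in a sufficiently small ball I write $\eta(I_1)-\eta(I_2)=A(I_1,I_2)(I_1-I_2)$ with $A(I_1,I_2)=\int_0^1 D_I\eta((1-s)I_2+sI_1;t,h)\,ds$; on a ball small enough that $D_I\eta$ varies little, $A$ remains close to a fixed invertible matrix, so its operator norm and that of its inverse obey the same bounds $C_1,C_2$, yielding the bilipschitz estimate with $G_1=C_1^{-1}$, $G_2=C_2$. The main obstacle is securing the uniformity in $h$: this is precisely what the truncation of the symbol expansion at the end of Section~\ref{qbnfsec} achieves, and the vanishing of $K_1$ ensures the $(t,h)=(0,0)$ limit is approached at rate $O(h^2)$ with controlled derivatives, so no additional work on the KAM side is required.
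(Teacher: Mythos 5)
Your proposal is correct and follows essentially the same route as the paper: identify the $(t,h)=(0,0)$ limit of $\eta$ as $(H^0(I),\overline{\partial_t H}(I))$ using the truncated expansion and \eqref{deriv1}, invoke the linear independence hypothesis \eqref{li-cond} to get a uniform lower bound on the Jacobian determinant, and propagate to small $(t,h)$ by the uniform-in-$(t,h)$ smoothness noted at the end of Section~\ref{qbnfsec}. You spell out the final bilipschitz step (convexity of $D$ plus a quantitative inverse function theorem) more explicitly than the paper, which simply asserts ``the claim follows from the linear independence,'' but this is the same implicit reasoning.
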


\begin{proof}
	From \eqref{deriv1} and the finite symbolic expansion
	\begin{equation}
	K^0(I,t;h)=\sum_{j\leq \gamma} K_j(I,t)h^j
	\end{equation}
	with each $K_j$ smooth, it follows that
	\begin{equation}
	\partial_tK^0(I,0;h)=(2\pi)^{-2}\int_{\T^2} \partial_tH(\theta,I;0)\, d\theta+O(h)
	\end{equation}
	uniformly in $I$.
	Hence 
	\begin{equation}
	\partial_t K^0(I,t;h)=(2\pi)^{-2}\int_{\T^2} \partial_tH(\theta,I;0)\, d\theta+O(t)+O(h)
	\end{equation}
	Moreover,
	\begin{equation}
	K^0(I,t;h)=K_0(I,t)+O(h)=K_0(I,0)+O(t)+O(h).
	\end{equation} 
	Hence the claim follows from the linear independence of 
	\[\nabla H^0(I)\] and \[\nabla \left(\int_{\T^2}\partial_tH(\theta,I;0)\, d\theta\right)\]
	by taking $t_0,h_0$ sufficiently small.
\end{proof}

\subsection{Non-concentration of quasi-eigenvalues}
\label{mainsec2}
In Section \ref{qbnfsec}, we introduced a family of quasimodes for the quantization $H(x,hD)$ using the quantum Birkhoff normal form \ref{main2}.
In particular, the quasieigenvalues were given in terms of the quantum Birkhoff normal form by
\[\mu_m(t;h)=K^0(I_m,t;h) \]
where
$I_m=h(m+\vartheta/4)$ for $m\in \mathcal{M}_h(t)$. We write $\mu_m(t)=K^0(I_m,t;h)$ even when $I_m \notin \mathcal{M}_h(t)$.

A consequence of the nonresonance condition \eqref{nonres} is a lower bound on the difference between quasieigenvalues associated to actions $I_m,I_n$ with a small difference.
\begin{proposition}
	\label{nonresonancenonconc}
	There exist constants $C_1,C_2>0$ dependent on our choice of perturbation $H$ and on the nonresonance constant $\kappa$ but independent of $t$ and $h$, such that for all $m,n\in \Z^2$ with $I_m,I_n\in D$ such that
	\begin{equation}
	|I_m-I_n|\leq C_1h^{3/4}
	\end{equation}
	and $n\in\mathcal{M}_h(t)$ we have
	\begin{equation}
	\label{mudiff}
	|\mu_m-\mu_n|\geq C_2h^{3/2}.
	\end{equation}
\end{proposition}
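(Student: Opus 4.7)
The plan is to expand the quasieigenvalue difference $\mu_m - \mu_n = K^0(I_m,t;h) - K^0(I_n,t;h)$ as an explicit Taylor series in $I_m - I_n$, identify a non-vanishing leading contribution via the Diophantine condition on nearby nonresonant frequencies, and then tune $C_1$ so this leading term dominates the error.

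First, since $I_m - I_n = h(m-n)$ and $K_1 \equiv 0$, a first-order Taylor expansion of $K_0(\cdot,t)$ about $I_n$, together with the finite symbolic expansion $K^0 = K_0 + h^2 K_2 + \dots + h^\gamma K_\gamma$, gives
\begin{equation*}
\mu_m - \mu_n = h\,\nabla_I K_0(I_n,t)\cdot(m-n) + O(h^2 |m-n|^2) + O(h^2|I_m - I_n|),
\end{equation*}
where the last error absorbs the contributions of $h^j K_j$ for $j\geq 2$ (using the uniform bounds on $K_j$ and its $I$-derivatives from the end of Section \ref{qbnfsec}). Because $|I_m - I_n|\leq C_1 h^{3/4}$ forces $|m-n|\leq C_1 h^{-1/4}$, both error terms are $O(h^{3/2})$ (with constants proportional to $C_1^2$ and $C_1$ respectively).

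Next I would exploit the hypothesis $n\in \mathcal{M}_h(t)$: by definition there is some $I'_n\in E_\kappa(t)$ with $|I_n - I'_n|\leq Lh$, so $\omega_n := \nabla_I K_0(I'_n,t)\in \Omega_\kappa$. Replacing $\nabla_I K_0(I_n,t)$ by $\omega_n$ introduces a further $O(h)$ error, so
\begin{equation*}
\mu_m - \mu_n = h\,\langle \omega_n, m-n\rangle + O(h^2|m-n|) + O(h^2|m-n|^2).
\end{equation*}
If $m=n$ the claim is vacuous (we may assume $m\neq n$, since otherwise $I_m = I_n$ and the statement carries no content against $C_2 h^{3/2}>0$; any genuine content of the proposition is for $m\neq n$). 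For $m\neq n$ the Diophantine condition applied with $\tau = 2$ yields
\begin{equation*}
|h\,\langle \omega_n, m-n\rangle| \;\geq\; \frac{\kappa h}{|m-n|^2} \;\geq\; \frac{\kappa}{C_1^2}\,h^{3/2},
\end{equation*}
using $|m-n|\leq C_1 h^{-1/4}$.

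Finally, comparing orders, the leading term is $\geq \kappa C_1^{-2} h^{3/2}$ and the errors are at worst $C'(C_1 + C_1^2)h^{3/2}$, where $C'$ depends only on uniform bounds for the second derivatives of $K_0$ and the subprincipal $K_j$'s (which are independent of $t,h$ by the remark at the end of Section \ref{qbnfsec}). Choosing $C_1$ small enough that $C'(C_1 + C_1^2)\leq \tfrac{1}{2}\kappa C_1^{-2}$ yields $|\mu_m-\mu_n|\geq C_2 h^{3/2}$ with $C_2 = \tfrac{1}{2}\kappa C_1^{-2}$.

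The conceptually delicate step — and the one driving the exponent $3/4$ in the hypothesis — is the balance in Step 3: the Diophantine lower bound degrades like $|m-n|^{-2}$ while the quadratic Taylor error grows like $|m-n|^{2}$, and only the constraint $|m-n|\lesssim h^{-1/4}$ makes these two comparable at the scale $h^{3/2}$. Everything else (the switch from $\omega(I_n,t)$ to $\omega_n\in \Omega_\kappa$, the suppression of $h^j K_j$ contributions for $j\geq 2$, and the uniformity of constants in $t,h$) is routine given Proposition \ref{geoprop} and the symbol bounds already recorded for $K^0$.
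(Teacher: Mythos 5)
Your proof is correct and matches the paper's argument step for step: Taylor expand $K_0$ to second order, replace $\nabla K_0(I_n,t)$ by a genuinely Diophantine frequency $\omega_n\in\Omega_\kappa$ (with $O(h)$ error, courtesy of $n\in\mathcal{M}_h(t)$), apply the Diophantine bound with $\tau=2$, and then balance the $\kappa C_1^{-2}h^{3/2}$ lower bound against the $O(C_1^2 h^{3/2})$ quadratic remainder by taking $C_1$ small. The only cosmetic difference is that you explicitly dispose of the trivial $m=n$ case, which the paper leaves implicit.
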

\begin{proof}
	First, by taking the leading order term in the semiclassical expansions of the $K^0$, we have
	\begin{equation}
	\label{leading}
	|\mu_m-\mu_n|\geq |K_0(I_m,t)-K_0(I_n,t)|+O(h^2)
	\end{equation}
	uniformly for $t<t_0$.
	Taylor expansion yields
	\begin{equation}
	\label{nonresest}
	K_0(I_m,t)-K_0(I_n,t)=h\nabla K_0 (I_n)\cdot(m-n)+h^2 \langle \nabla^2 K_0(\tilde{I},t)(m-n),(m-n) \rangle 
	\end{equation}
	for some $\tilde{I}$ on the line segment between $I_m$ and $I_n$.
	
	Since $I_n\in\mathcal{M}_h(t)$, we also have
	\begin{equation}
	|\nabla K_0(I_n)-\nabla K_0(I_\omega)|=O(h)
	\end{equation}
	uniformly for $t< t_0$ where $I_\omega$ is some nonresonant action corresponding to a nonresonant frequency $\omega\in\Omega_\kappa$.
	Inserting this estimate into \eqref{nonresest}, we obtain
	\begin{eqnarray}\nonumber
	K_0(I_m,t)-K_0(I_n,t)&=&h\nabla K_0(I_\omega)\cdot (m-n)+O(h^2|m-n|^2)+O(h^2|m-n|)\\
	\nonumber&=&h\nabla K_0(I_\omega)\cdot (m-n)+O(h^2|m-n|^2)\\
	\nonumber &\geq & \frac{h\kappa}{|m-n|^2}+O(h^2|m-n|^2)\\
	&\geq & (\kappa C_1^{-2} +O(C_1^2))h^{3/2}\label{balance}
	\end{eqnarray}
	by bounding the leading term below using the nonresonance condition \eqref{nonres}.
	
	The claim now follows from \eqref{leading} and \eqref{balance} upon choosing $C_1$ suitably small.
	
\end{proof}
Qualitatively, Proposition \ref{nonresonancenonconc} shows that if two distinct quasieigenvalues $\mu_m,\mu_n$ are very close (that is, less than $C_2 h^{3/2}$ apart), then there is a lower bound on how close their actions $I_m,I_n$ can be (they must differ by at least $C_1 h^{3/4}$). Applying Proposition \ref{geoprop}, this in fact gives us a lower bound (of the order of $h^{3/4}$)  on the difference of speeds $\partial_t(\mu_m-\mu_n)$. This forces them to separate quite quickly as $t$ evolves. This is quantified in the following proposition.

\begin{proposition}
	\label{separation} Choose any $\gamma > 3/2$. 
	Suppose that $h<h_0$, $m,n\in \Z^2$, and $t_*\in(0,t_0)$ are fixed with $I_m,I_n\in D$, $m\in\mathcal{M}_h(t_*)$ and
	\begin{equation}
	|\mu_m(t_*,h)-\mu_n(t_*,h)|<h^\gamma<C_2h^{3/2}.
	\end{equation}
	If we denote 
	\begin{equation}
	\mathcal{C}_{m,n}(h)=\{t\in (0,t_0):|\mu_m-\mu_n|<h^\gamma \},
	\end{equation}
	then there exist positive constants $\tilde{C}_1,\tilde{C}_2$ which depend on the constants $C_1,C_2$ from Proposition \ref{nonresonancenonconc} as well as the geometric constants   $G_1,G_2$ from Proposition \ref{geoprop} such that
	\begin{equation}
	\frac{\meas\big( [t_*-\tilde{C}_1h^{3/4},t_*+\tilde{C}_1h^{3/4}]\cap \mathcal{C}_{m,n}(h) \big)}{h^{3/4}}< \tilde{C}_2h^{\gamma-3/2}.
	\label{Cmn}	\end{equation}
\end{proposition}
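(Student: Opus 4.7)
The plan is to show that at $t_*$ the derivative $\partial_t(\mu_m - \mu_n)$ has magnitude at least of order $h^{3/4}$, then bootstrap this to the whole interval $[t_* - \tilde{C}_1 h^{3/4}, t_* + \tilde{C}_1 h^{3/4}]$ by controlling $\partial_t^2 \mu_m$, and finally conclude by observing that a function with derivative of modulus at least $c h^{3/4}$ can spend $t$-measure at most $2h^\gamma/(c h^{3/4})$ in the window $(-h^\gamma, h^\gamma)$.

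First, at $t = t_*$, the hypothesis $|\mu_m - \mu_n|(t_*) < h^\gamma < C_2 h^{3/2}$ together with $m \in \mathcal{M}_h(t_*)$ places us in the contrapositive of Proposition \ref{nonresonancenonconc}, so $|I_m - I_n| \geq C_1 h^{3/4}$. Since $\mu_m(t) = K^0(I_m, t; h)$, the map $\eta$ from Proposition \ref{geoprop} evaluated at time $t_*$ satisfies
\[
|\eta(I_m) - \eta(I_n)|^2 = (\mu_m - \mu_n)^2(t_*) + \bigl(\partial_t \mu_m - \partial_t \mu_n\bigr)^2(t_*) \geq G_2^{-2} C_1^2 h^{3/2}.
\]
The first summand on the right is bounded by $h^{2\gamma}$, which is $o(h^{3/2})$ because $\gamma > 3/2$; hence, for $h$ below some $h_0$,
\[
|\partial_t(\mu_m - \mu_n)(t_*)| \geq \tfrac{1}{2} G_2^{-1} C_1 \, h^{3/4}.
\]

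Second, the uniform bounds on $\partial_t^2 K^0(I, t; h)$ noted at the end of Section \ref{qbnfsec} supply a constant $L$, independent of $(I_m, I_n, t, h)$, such that $t \mapsto \partial_t(\mu_m - \mu_n)(t)$ is $L$-Lipschitz. Choosing $\tilde{C}_1 := G_2^{-1} C_1/(4L)$ gives
\[
|\partial_t(\mu_m - \mu_n)(t)| \geq \tfrac{1}{4} G_2^{-1} C_1 \, h^{3/4} =: c h^{3/4}
\]
for every $t$ with $|t - t_*| \leq \tilde{C}_1 h^{3/4}$. On this interval, $\mu_m - \mu_n$ is a monotone function of $t$ whose derivative has absolute value at least $c h^{3/4}$; the preimage of $(-h^\gamma, h^\gamma)$ is therefore an interval of $t$-length at most $2h^\gamma/(c h^{3/4})$, and dividing by $h^{3/4}$ yields \eqref{Cmn} with $\tilde{C}_2 := 8 G_2/C_1$.

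The only real subtlety is uniformity of the Lipschitz constant $L$ in the parameters $(I_m, I_n, h)$; this follows from the uniform bounds on second $t$-derivatives of each $K_j$ in the finite symbolic expansion of $K^0$, as explicitly noted at the end of Section \ref{qbnfsec}. With that uniformity granted, every step reduces to the one-variable mean value theorem applied to $\mu_m - \mu_n$ as a function of $t$.
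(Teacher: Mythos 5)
Your proof is correct and follows essentially the same strategy as the paper's: combine Propositions \ref{nonresonancenonconc} and \ref{geoprop} to obtain a lower bound of order $h^{3/4}$ on $|\partial_t(\mu_m-\mu_n)(t_*)|$, propagate it over an $h^{3/4}$-sized time window by controlling the second $t$-derivative, and convert the resulting monotonicity of $\mu_m-\mu_n$ into the stated measure bound. Your explicit Pythagorean split of $|\eta(I_m)-\eta(I_n)|^2$ (using $\gamma>3/2$ to discard the $(\mu_m-\mu_n)^2$ component) makes precise what the paper asserts in one line, and your Lipschitz argument on $\partial_t(\mu_m-\mu_n)$ is a clean equivalent of the paper's Taylor expansion of $\mu_m-\mu_n$ itself.
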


\begin{proof}
	From Proposition \ref{geoprop}, we have that 
	\begin{equation}
	|\partial_t \mu_m(t,h)-\partial_t \mu_n(t,h)|=|\partial_t K^0(I_m,t;h)-\partial_t K^0(I_n,t;h)|\geq Ch^{3/4}.
	\end{equation}
	where $C$ depends on $C_1,C_2,\kappa,L$ and the geometric constants $G_i$.
	
	By Taylor expanding we have
	\begin{equation}
	\mu_m(t;h)=K^0(I_m,t_*;h)+(t-t_*)\partial_t(K^0(I_m,t_*;h))+O(|t-t_*|^2)
	\end{equation}
	with error term uniform in $h$ and $m$. It follows that 
	\begin{equation}
	|\mu_m(t;h) - \mu_n(t;h) | =(t-t_*)|\partial_t K^0(I_m,t;h)-\partial_t K^0(I_n,t;h)| + O(h^\gamma) + O(|t-t_*|^2).
	\end{equation}
	By choosing $\tilde C_1$ sufficiently small, the quadratic term $O(|t-t_*|^2)$ is dominated by the linear term for $|t-t_*| \leq \tilde C_1 h^{3/4}$. Also, the $h^\gamma$ term is dominated by the others for sufficiently small $h$ since $\gamma > 3/2$. It follows that we have 
	\begin{equation}
	|\mu_m(t;h) - \mu_n(t;h) | \geq \frac1{2} (t-t_*)|\partial_t K^0(I_m,t;h)-\partial_t K^0(I_n,t;h)| 
	\end{equation}
	for $t \in [t_*-\tilde{C}_1h^{3/4},t_*+\tilde{C}_1h^{3/4}]$. Hence, we only have $t \in \mathcal{C}_{m,n}(h)$ for $|t-t_*| \leq C^{-1}  h^{\gamma - 3/4}$. This yields \eqref{Cmn}, where the the constants $\tilde{C}_i$  depend on the original Hamiltonian, the perturbation, $\kappa$, $L$, and the $G_i$, but not on $t$ or $h$.
\end{proof}

From Proposition \ref{separation}, we can deduce that, provided that $\gamma$ exceeds $7/2$,  for any fixed index $m\in \Z^2$ with ${I_m\in D}$, and for any $t\in (0,t_0)$ for which $m\in\mathcal{M}_h(t)$, $\mu_m(t,h)$ is typically the only quasieigenvalue in a window of size $O(h^\gamma)$.

To state this consequence precisely, we introduce some new notation. For each $m\in \Z^n$ such that $I_m\in D$, we define
\begin{equation}
A_m:=\{t\in(0,t_0):m\in\mathcal{M}_h(t)\}
\end{equation}
and
\begin{equation}
B_m:=\{t\in (0,t_0):m\in\mathcal{M}_h(t) \textrm{ and }|\mu_m(h)-\mu_n(h)|>h^\gamma \textrm{ for all }n\neq m \textrm{ with }I_n\in D\}.
\end{equation}

\begin{proposition}
	\label{nonconc1}
	Assume $\gamma > 7/2$. For $A_m,B_m$ defined as above, we can choose $\epsilon(h)>0$ such that we have
	\begin{equation}
	\meas(B_m) \geq (1-\epsilon(h)^2)\meas(A_m)
	\end{equation}
	with
	\begin{equation}
	\epsilon(h)=O(h^{\gamma/2-7/4})
	\end{equation}
\end{proposition}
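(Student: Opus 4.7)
The plan is to union-bound $A_m \setminus B_m$ over competing indices $n$, control each term using Proposition~\ref{separation}, and close up with a crude lattice count on $n$. Explicitly, $t \in A_m \setminus B_m$ iff $t \in A_m$ and there exists $n \neq m$ with $I_n \in D$ and $|\mu_m(t,h) - \mu_n(t,h)| \leq h^\gamma$, so
\[
 A_m \setminus B_m \;\subset\; \bigcup_{\substack{n \in \Z^2,\, n \neq m \\ I_n \in D}} \bigl(A_m \cap \mathcal{C}_{m,n}(h)\bigr),
\]
and hence $\meas(A_m \setminus B_m) \leq \sum_n \meas(A_m \cap \mathcal{C}_{m,n}(h))$.

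For each fixed $n$, since $\gamma > 7/2 > 3/2$ gives $h^\gamma < C_2 h^{3/2}$ for small $h$, Proposition~\ref{separation} applies at every $t_* \in A_m \cap \mathcal{C}_{m,n}(h)$. I will partition $[0, t_0]$ into $O(h^{-3/4})$ intervals of length $\tilde C_1 h^{3/4}$; any such interval $J$ that contains some $t_* \in A_m \cap \mathcal{C}_{m,n}(h)$ is itself contained in the $\tilde C_1 h^{3/4}$-neighbourhood of $t_*$, so Proposition~\ref{separation} yields $\meas(J \cap \mathcal{C}_{m,n}(h)) < \tilde C_2 h^{\gamma - 3/4}$. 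Summing over the at most $O(\meas(A_m)/h^{3/4})$ such intervals that meet $A_m$ gives
\[
\meas\bigl(A_m \cap \mathcal{C}_{m,n}(h)\bigr) = O\bigl(\meas(A_m)\, h^{\gamma - 3/2}\bigr).
\]

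To finish, I use the trivial lattice bound: since $I_n = h(n+\vartheta/4)$ ranges over a lattice of spacing $h$ and $D \subset \R^2$ is bounded, there are at most $O(h^{-2})$ values of $n$ with $I_n \in D$. Multiplying,
\[
\meas(A_m \setminus B_m) \;=\; O\bigl(\meas(A_m)\, h^{\gamma - 7/2}\bigr),
\]
so $\meas(B_m) \geq (1 - C h^{\gamma - 7/2})\meas(A_m)$, whence $\epsilon(h)^2 = C h^{\gamma-7/2}$ and $\epsilon(h) = O(h^{\gamma/2 - 7/4})$ as claimed. The main obstacle I anticipate is the covering step: the bound $O(\meas(A_m)/h^{3/4})$ on the number of partition intervals meeting $A_m$ is really a geometric statement about $A_m$, and needs either some regularity (for instance, boundedly many connected components) or a Vitali-type argument to be made airtight. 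The $O(h^{-2})$ count on $n$ is deliberately wasteful---a thin-strip argument in action space would give only $O(h^{-1})$ contributing $n$, yielding a better exponent---but the factor $h^{-2}$ is exactly what produces the stated exponent $\gamma-7/2$ in $\epsilon^2$.
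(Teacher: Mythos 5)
Your overall strategy matches the paper's: union bound over competing indices $n$, apply Proposition~\ref{separation} to each pair $(m,n)$, and then use the $O(h^{-2})$ lattice count on $n$. The paper does exactly this. However, the covering step is a genuine gap, and not merely a technicality that a Vitali argument would repair.

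The issue is the claimed count $O(\meas(A_m)/h^{3/4})$ on partition intervals meeting $A_m$. This is a \emph{lower} bound on the number of intervals covering $A_m$, not an upper bound: if $A_m$ has many components, or is a single component shorter than $h^{3/4}$, the number of partition intervals meeting it can easily exceed $\meas(A_m)/h^{3/4}$. In fact the paper's remark before Proposition~\ref{nonresonancenonconc} says $A_m$ is typically of measure $O(h)$, so $\meas(A_m)/h^{3/4} = O(h^{1/4}) < 1$ while $A_m$ is nonempty: at least one interval meets it, so your count fails by a factor of $h^{-1/4}$ in precisely the generic case. Moreover the extreme situation your bound rules out (namely, $A_m \subset \mathcal{C}_{m,n}(h)$ entirely) is not excluded by Proposition~\ref{separation} alone, so the extra $\meas(A_m)$ factor cannot be recovered by the argument you sketch. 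The paper sidesteps this by using the trivial count: the whole of $(0,t_0)$ is covered by $O(h^{-3/4})$ windows, each contributing $O(h^{\gamma-3/4})$, giving $\meas\bigl(A_m \cap \mathcal{C}_{m,n}(h)\bigr) = O(h^{\gamma - 3/2})$ \emph{without} the $\meas(A_m)$ factor, and hence $\meas(A_m\setminus B_m) = O(h^{\gamma - 7/2})$ after summing over the $O(h^{-2})$ lattice points.

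You should note, though, that dropping the $\meas(A_m)$ factor means the bound $\meas(A_m\setminus B_m) = O(h^{\gamma-7/2})$ does not by itself yield $\meas(B_m) \ge (1 - \epsilon(h)^2)\meas(A_m)$ with an $m$-uniform $\epsilon(h) = O(h^{\gamma/2-7/4})$, unless $\meas(A_m)$ is bounded below. The way the proposition is actually used in Proposition~\ref{goodtime} is after summing over $m$, where $\sum_m \meas(A_m) \sim C h^{-2}$ and $\sum_m \meas(A_m\setminus B_m) = O(h^{-2})\cdot O(h^{\gamma-7/2})$, which does give the required ratio $\epsilon(h)^2 = O(h^{\gamma-7/2})$. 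So the correct reading is an averaged one over $m$; your $\meas(A_m)$-weighted version would deliver the pointwise-in-$m$ statement if it were true, but the covering lemma you would need for it is false.
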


\begin{proof}
	Proposition \ref{separation} implies that the measure of the set 
	\begin{equation}
	\label{flowspeed}
	\Big\{t\in (0,t_0):m\in \mathcal{M}_h(t)\textrm{ and }|\mu_m(t;h)-\mu_n(t;h)|< h^\gamma\Big\}
	\end{equation}
	is bounded by $C h^{\gamma - 3/2}$ 
	for any $m,n\in\mathbb{Z}^2$ such that $I_m, I_n \in D$, and any $h<h_0$.
	
	The total number of $n$ such that $I_n \in D$ is $O(h^{-2})$. 
	By summing over all such $n$, we obtain an upper bound of $O(h^{\gamma-7/2})$ for the amount of time that $\mu_m(t)$ is a quasi-eigenvalue that is within $h^\gamma$ of another quasi-eigenvalue.
	
	The discussion above implies that
	\begin{equation}
	\label{usuallyisolated}
	\meas(B_m) \geq (1-\epsilon(h)^2)\meas(A_m)
	\end{equation}
	with
	\begin{equation}
	\label{epsdef}
	\epsilon(h)= O(h^{\gamma/2-7/4}).
	\end{equation}
\end{proof}

We can also recast Proposition \ref{nonconc1} as a statement of nonconcentration of quasi-eigenvalues for fixed $t$.
We use the notation
\begin{equation}
N_1(t,h)=\#\mathcal{M}_h(t) = \#\{m\in \mathbb{Z}^2:I_m\in D \textrm{ and } t\in A_m\}
\end{equation}
and
\begin{equation}
N_2(t,h)=\# \mathcal{B}_h, \quad \mathcal{B}_h := \{m\in \mathbb{Z}^2:I_m\in D \textrm{ and } t\in B_m\}. 
\label{BN2}\end{equation}

\begin{proposition}
	\label{goodtime}
	Let $N_1, N_2$ be defined as above. Then the set 
	\begin{equation}
	\label{nonconccond}
	\mathcal{G} := \Big\{ t \in (0, t_0) \mid \exists \text{ sequence } h_j \to 0 \text{ such that } \frac{N_2(t,h_j)}{N_1(t,h_j)}>1-\epsilon(h_j) \Big\}
	\end{equation}
	has full measure in $(0, t_0)$. 
\end{proposition}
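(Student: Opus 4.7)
My plan is to aggregate the per-index bound of Proposition \ref{nonconc1} over $m$ using Fubini, convert this into a measure estimate on the ``bad set''
\begin{equation*}
\mathcal{X}(h):=\{t\in(0,t_0):N_2(t,h)\leq (1-\epsilon(h))N_1(t,h)\},
\end{equation*}
and then apply the Borel--Cantelli lemma along a geometric sequence $h_k\to 0$. Any $t\in(0,t_0)$ that belongs to $\mathcal{X}(h_k)$ for only finitely many $k$ automatically lies in $\mathcal{G}$, with the tail $\{h_k\}_{k\geq k_0(t)}$ serving as the required sequence in \eqref{nonconccond}.

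The measure estimate comes from the tautology $N_1(t,h)-N_2(t,h)=\sum_{m:I_m\in D}\mathbf{1}_{A_m\setminus B_m}(t)$, combined with Fubini and Proposition \ref{nonconc1}:
\begin{equation*}
\int_0^{t_0}\bigl(N_1(t,h)-N_2(t,h)\bigr)\,dt=\sum_{m:I_m\in D}\meas(A_m\setminus B_m)\leq \epsilon(h)^2\sum_{m:I_m\in D}\meas(A_m)=\epsilon(h)^2\int_0^{t_0}N_1(t,h)\,dt.
\end{equation*}
The Weyl-type asymptotic $N_1(t,h)\sim (2\pi h)^{-2}\meas(E_\kappa(t))$ recorded in Section \ref{qbnfsec}, together with the uniform positivity of $\meas(E_\kappa(t))$ for $t\in(0,t_0)$ (which follows from $\meas(\Omega_\kappa)>0$ and smooth $t$-dependence of the KAM normal form, allowing us to shrink $t_0$ if necessary), gives $ch^{-2}\leq N_1(t,h)\leq Ch^{-2}$ uniformly in $t$. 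On $\mathcal{X}(h)$ we have $N_1-N_2\geq \epsilon(h)N_1\geq c\epsilon(h)h^{-2}$, so
\begin{equation*}
c\,\epsilon(h)h^{-2}\meas(\mathcal{X}(h))\leq \int_{\mathcal{X}(h)}(N_1-N_2)\,dt\leq \epsilon(h)^2\cdot Ch^{-2}t_0,
\end{equation*}
yielding $\meas(\mathcal{X}(h))\leq C'\epsilon(h)$.

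To conclude, take $h_k=2^{-k}$. Since $\gamma>7/2$, Proposition \ref{nonconc1} gives $\epsilon(h_k)=O(2^{-k(\gamma/2-7/4)})$ with strictly positive exponent, so $\sum_k\meas(\mathcal{X}(h_k))<\infty$. Borel--Cantelli applied on $\bigl((0,t_0),dt\bigr)$ then says $\limsup_k \mathcal{X}(h_k)$ is a null set, and its complement is the desired full-measure subset of $\mathcal{G}$. The only non-routine input is the uniform two-sided bound $N_1(t,h)\asymp h^{-2}$, which is what turns the aggregated estimate into a measure estimate on $\mathcal{X}(h)$ via Markov's inequality; everything else is a direct summation of Proposition \ref{nonconc1} and a standard first Borel--Cantelli argument, where the exponent condition $\gamma>7/2$ is used precisely to secure summability along a geometric sequence.
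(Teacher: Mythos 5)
Your proof is correct, and up through the Fubini aggregation and the restriction to the bad set $\mathcal{X}(h)$ it matches the paper's argument; the routes diverge only at the last step. The paper keeps the integrand $N_1(t,h)$ inside the integral, multiplies by $h^2$, and passes to the limit with Fatou's lemma, using only the pointwise asymptotic $h^2 N_1(t,h)\to (2\pi)^{-2}\meas(E_\kappa(t))>0$ to conclude $\liminf_h \mathbf{1}_{S_h}(t)=0$ a.e. You instead use the uniform two-sided bound $N_1(t,h)\asymp h^{-2}$ to cash out a Markov-type measure estimate $\meas(\mathcal{X}(h))\lesssim\epsilon(h)$ and then run first Borel--Cantelli along $h_k=2^{-k}$. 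Both are valid; the trade-offs are worth noting. Fatou only needs $\epsilon(h)=o(1)$ and a pointwise positive lower bound on $\meas(E_\kappa(t))$, whereas Borel--Cantelli needs $\sum_k\epsilon(h_k)<\infty$, which you correctly secure via the polynomial rate $\epsilon(h)=O(h^{\gamma/2-7/4})$ with $\gamma>7/2$ along a geometric sequence, plus a \emph{uniform-in-$t$} version of the Weyl-type asymptotic for $N_1$ (which the paper also implicitly relies on, but is worth flagging since it requires the lattice-counting error to be uniform on $(0,t_0)$). In return you get a slightly sharper conclusion: for a.e.\ $t$ a tail of the \emph{fixed} geometric sequence $\{2^{-k}\}$ can be taken as the sequence $h_j$ in \eqref{nonconccond}, rather than an unspecified $t$-dependent sequence.
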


\begin{proof}
	By Fubini's theorem, we have
	$$
	\int_0^{t_0} N_2(t,h) \, dt = \sum_{m : I_m \in D} \meas(B_m),
	$$
	with a similar relation for the sets $N_1$ and $A_m$. 
	From \eqref{usuallyisolated}, it follows that
	\begin{equation}
	(1-\epsilon(h)^2)\int_0^{t_0} N_1(t,h) \, dt \leq \int_0^{t_0} N_2(t,h)\, dt.
	\label{NN}\end{equation}
	Let $S_h$ be the set 
	\begin{equation}
	S_h=\{t\in (0,t_0):N_2(t,h)\leq (1-\epsilon(h))N_1(t,h)\}.
	\end{equation}
	Then since $\epsilon(h) N_1(t,h) \leq N_1(t,h)-N_2(t,h)$ for $t \in S_h$, we have 
	\begin{equation}\begin{aligned}
	\epsilon(h)\int_{S_h} N_1(t,h) \leq \int_{S_h} \big( N_1(t,h)-N_2(t,h)\big) \, dt &\leq \int_0^{t_0} \big( N_1(t,h)-N_2(t,h) \big) \, dt \\
	&\leq \epsilon(h)^2 \int_0^{t_0} N_1(t,h) \, dt
	\end{aligned}\end{equation}
	where we used \eqref{NN} in the last step. 
	Consequently,
	\begin{equation}
	\int_{S_h} N_1(t,h) \, dt \leq \epsilon(h) \int_0^{t_0} N_1(t,h) \, dt. 
	\end{equation}
	Since
	\begin{equation}
	N_1(t,h)=\# \mathcal{M}_h(t)\sim h^{-2}\meas(E_\kappa(t))
	\end{equation}
	we deduce that 
	\begin{equation}
	\label{integralbound}
	h^2\int_0^{t_0} 1_{S_h}(t)\cdot N_1(t,h)\, dt \leq h^2\epsilon(h)\int_0^{t_0} N_1(t,h)\, dt=O(\epsilon(h))=o(1).
	\end{equation}
	Fatou's lemma then implies that
	\begin{equation}
	\int_0^{t_0}\left(\liminf_{h\rightarrow 0} 1_{S_h}(t)\cdot \meas(E_\kappa(t))\right)\, dt=0.
	\end{equation}
	As $\meas(E_\kappa(t))$ is bounded away from zero, this shows that $\liminf_{h\rightarrow 0} 1_{S_h}(t)$ vanishes almost everywhere, 
	which completes the proof.
\end{proof}

\subsection{Non-concentration implies positive mass}
\label{mainsec3} We now consider a fixed $t \in \mathcal{G}$ and a fixed sequence $h_j\rightarrow 0$ such that the conclusions of Proposition \ref{goodtime} hold. We supress $t$-dependence of various quantities in our notation in this section for brevity.

Introducing the energy windows
\begin{equation}
\mathcal{W}_m(h):=[\mu_m(h)-h^\gamma/3,\mu_m(h)+h^\gamma/3],
\end{equation}
Proposition \ref{goodtime} implies that for a sequence $h=h_j\rightarrow 0$ we can find a large subcollection $\mathcal{B}_{h_j}\subset \mathcal{M}_{h_j}$ of size $N_2(h_j)$ (see \eqref{BN2}) that indexes a disjoint subcollection of energy windows $\mathcal{W}_m(h_j)$.

We now want to study the number of true eigenvalues $E_j(h)$ lying in the window $\mathcal{W}_m(h_j)$. Let
\begin{equation}
\mathcal{N}_m(h)=\#\{E_j(h)\in \mathcal{W}_m(h)\}.
\end{equation}
From Weyl's law, the total number of eigenvalues in the energy band $[a,b]$ is asymptotic to $(2\pi h)^{-2}\meas(p^{-1}([a,b]))$ where $p=\sigma(\mathcal{P}_h)$, whilst the number of quasimodes in our local patch $\T^n\times D$ that are $O(h^\gamma)$-isolated in energy satisfies the asymptotic $N_2(h_j)\sim N_1(h_j)\sim (2\pi h_j)^{-2}\meas(E_\kappa)$. Typically the ratio \begin{equation}
R:=\frac{\meas(p^{-1}([a,b]))}{\meas(E_\kappa)}
\end{equation}
will be much larger than $1$ if our coordinate patch is very small.

Fixing $\lambda >1$, we define
\begin{equation}
\tilde{\mathcal{B}}_{h}(\lambda):=\{m\in\mathcal{B}_{h}:\mathcal{N}_m(h)<\lambda R\}.
\end{equation}
From the disjointness of the  $\mathcal{W}_m(h)$ for $m\in \mathcal{B}_{h}$, together with the definition of the $\tilde{\mathcal{B}_h}$, we then have
\begin{equation}
\limsup_{j\rightarrow\infty }h_j^2\#(\mathcal{B}_{h_j}\setminus \tilde{\mathcal{B}_{h_j}})\cdot \lambda R \leq \frac{\meas(p^{-1}([a,b]))}{4\pi^2}.
\end{equation}
Since $h_j^2\# \mathcal{B}_{h_j}=h_j^2N_2(h_j)\rightarrow (4\pi^2)^{-1} \meas(E_\kappa)$ as $j\rightarrow\infty$, it follows that
\begin{equation}
\label{windowsboundedcount}
\frac{\#\tilde{\mathcal{B}}_{h_j}(\lambda)}{\#\mathcal{B}_{h_j}}=1-\frac{ \#(\mathcal{B}_{h_j}\setminus \tilde{\mathcal{B}_{h_j}})}{\mathcal{B}_{h_j}}> 1-2/\lambda
\end{equation}
for each sufficiently large $j$.
That is, the proportion of $O(h^\gamma)$-sized energy windows associated to actions in $\mathcal{B}_{h}$ containing at most  $\lambda R$ eigenvalues is at least $1-2/\lambda$. 

\begin{lem}\label{lem:spectral}
	For each $m\in \tilde{A}_{h_j}(\lambda)$,  there exists an eigenfunction $u_{k_j}(h_j)$ with eigenvalue $E_j(h_j) \in [\mu_m-h^\gamma,\mu_m+h^\gamma]$ such that 
	\begin{equation}
	\label{largeinnerprod}
	\max_{|E_j-\mu_m|\leq h^\gamma} |\langle u_j,v_m \rangle|\geq \frac{1-o(1)}{\lambda R}.
	\end{equation}
\end{lem}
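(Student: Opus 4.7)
The plan is a standard spectral decomposition argument combined with a pigeonhole. Let $\{u_k(h)\}$ be an orthonormal basis of eigenfunctions of $\mathcal{P}_h$ with eigenvalues $E_k(h)$, and write $v_m = \sum_k c_k u_k$ with $c_k = \langle v_m, u_k\rangle$. By the quasimode construction of Section \ref{qbnfsec} (with the truncation giving the $O(h^{\gamma+1})$ error), we have $\|v_m\| = 1 + o(1)$ and
\begin{equation*}
\|(\mathcal{P}_h - \mu_m)v_m\|^2 = \sum_k |E_k - \mu_m|^2\,|c_k|^2 = O(h^{2\gamma+2}).
\end{equation*}

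First, I would separate the sum according to whether $|E_k - \mu_m| \leq h^\gamma$ or $|E_k - \mu_m| > h^\gamma$. In the second case every factor $|E_k - \mu_m|^2$ exceeds $h^{2\gamma}$, hence
\begin{equation*}
\sum_{|E_k - \mu_m| > h^\gamma} |c_k|^2 \leq h^{-2\gamma}\,\|(\mathcal{P}_h - \mu_m)v_m\|^2 = O(h^2).
\end{equation*}
Combining this with Parseval's identity $\sum_k |c_k|^2 = \|v_m\|^2 = 1 + o(1)$ gives
\begin{equation*}
\sum_{|E_k - \mu_m| \leq h^\gamma} |c_k|^2 \geq 1 - o(1).
\end{equation*}

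Next, I would invoke the defining property of $\tilde{\mathcal{B}}_{h_j}(\lambda)$: for $m$ in this set there are at most $\lambda R$ indices $k$ with $E_k \in \mathcal{W}_m(h_j) \subset [\mu_m - h^\gamma, \mu_m + h^\gamma]$. Therefore, by the pigeonhole principle applied to the nonnegative quantities $|c_k|^2$ summed above,
\begin{equation*}
\max_{|E_k - \mu_m| \leq h^\gamma} |c_k|^2 \geq \frac{1 - o(1)}{\lambda R},
\end{equation*}
so taking square roots and using $\sqrt{\lambda R} \leq \lambda R$ (valid since $\lambda R > 1$) yields the stated bound
\begin{equation*}
\max_{|E_j - \mu_m| \leq h^\gamma} |\langle u_j, v_m\rangle| \geq \frac{\sqrt{1-o(1)}}{\sqrt{\lambda R}} \geq \frac{1 - o(1)}{\lambda R}.
\end{equation*}
Choose $u_{k_j}$ to be an eigenfunction realising this maximum.

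There is no real obstacle here; the only point requiring care is that the truncated quasimode error bound of $O(h^{\gamma+1})$ is what makes the tail estimate $O(h^2)$ much smaller than $1$, which is why the quasimode construction was arranged with $\gamma \geq 4$ rather than the weaker exponential error used by Popov. The argument is completely standard once the ingredients from Section \ref{qbnfsec} (quasimode accuracy, approximate orthonormality) and the counting bound defining $\tilde{\mathcal{B}}_{h_j}(\lambda)$ are in place.
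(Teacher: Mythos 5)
Your proof is correct and follows essentially the same route as the paper's: both use the $O(h^{\gamma+1})$ quasimode error to show that the spectral mass of $v_m$ outside the window $[\mu_m - h^\gamma, \mu_m + h^\gamma]$ is $O(h^2)$, and then apply pigeonhole to the at most $\lambda R$ eigenvalues inside the window. The paper phrases this via the spectral projector $\pi_{[\mu_m-h^\gamma,\mu_m+h^\gamma]}$ and its complement, while you phrase it via the eigenfunction coefficients $c_k$, but these are the same argument.
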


\begin{proof} Since the quasimodes are of order $O(h^{\gamma+1})$, we have 
	\begin{eqnarray}
	\nonumber h^{\gamma}\|\pi_{[\mu_m-h^\gamma,\mu_m+h^\gamma]}^\perp(v_m)\|&\leq&\|(T-\mu_m)\pi_{[\mu_m-h^\gamma,\mu_m+h^\gamma]}^\perp(v_m)\|\\
	\nonumber&\leq &\|(T-\mu_m)v_m\|\\
	\nonumber&=&O(h^{\gamma+1})\\
	\Rightarrow \|\pi_{[\mu_m-h^\gamma,\mu_m+h^\gamma]}(v_m)\|&=&1-O(h)
	\end{eqnarray}
	where $\pi_I$ is the spectral projector associated to $\mathcal{P}_h$. This spectral projector can be expressed 
	$$
	\sum_{E_{k_j}(h_j) \in [\mu_m-h^\gamma,\mu_m+h^\gamma]} u_{k_j}(h_j) \langle \cdot, u_{k_j}(h_j) \rangle,
	$$
	from which \eqref{largeinnerprod} follows. 
\end{proof}
We now show that most nonresonant actions in $E_\kappa$ of KAM tori are close to actions in $\tilde{\mathcal{B}}_{h_j}(\lambda)$ for all sufficiently large $j$. This shows that the concentrating quasimodes associated to such torus actions be formed from the subfamily ${\tilde{\mathcal{B}}_{h_j}(\lambda)\subset \mathcal{M}_{h_j}}$. We introduce the notation 
\begin{equation}\begin{aligned}
\tilde{\mathcal{I}}_{h_j}(\lambda) &= \{ h(m + \vartheta/4) \mid m \in \tilde{\mathcal{B}}_{h_j}(\lambda) \} \\
{\mathcal{I}}_{h_j}(\lambda) &= \{ h(m + \vartheta/4) \mid m \in {\mathcal{M}}_{h_j}\}
\end{aligned}\label{tildeI}\end{equation}
for the actions corresponding to integer pairs $m \in \tilde{\mathcal{B}}_{h_j}(\lambda)$, respectively $m \in {\mathcal{M}}_{h_j}$. 

\begin{proposition}
	Let $\tilde{\mathcal{I}}_{h_j}(\lambda)$ be defined as in \eqref{tildeI}. Then 
	\label{mostactions}
	\begin{eqnarray}
	\frac{\meas \big(\{I\in E_\kappa(t):\dist(I,\tilde{\mathcal{I}}_{h_j}(\lambda))<Lh_j\}\big)}{\meas(E_\kappa)}
	&\geq& 1-\frac{L^2}{\pi \lambda}.
	\end{eqnarray}
	for all sufficiently large $j$.
\end{proposition}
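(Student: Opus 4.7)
The plan is to realise the complement of the set in question as a union of $Lh_j$-balls centred at the ``bad'' lattice points $m \in \mathcal{M}_{h_j} \setminus \tilde{\mathcal{B}}_{h_j}(\lambda)$, and then to bound both the number of such bad points and the area they cover.

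First I would use the very definition of $\mathcal{M}_{h_j}$: every $I \in E_\kappa(t)$ lies within distance $Lh_j$ of some action $I_m$ with $m \in \mathcal{M}_{h_j}$. Hence if $I \in E_\kappa(t)$ fails to lie within $Lh_j$ of $\tilde{\mathcal{I}}_{h_j}(\lambda)$, its closest $\mathcal{M}_{h_j}$-neighbour must already lie in $\mathcal{M}_{h_j} \setminus \tilde{\mathcal{B}}_{h_j}(\lambda)$. This yields the covering
\[
E_\kappa(t) \setminus \{I \in E_\kappa(t) : \dist(I,\tilde{\mathcal{I}}_{h_j}(\lambda)) < Lh_j\} \; \subset \; \bigcup_{m \in \mathcal{M}_{h_j} \setminus \tilde{\mathcal{B}}_{h_j}(\lambda)} B(I_m,Lh_j),
\]
so the measure of the left-hand side is bounded by $\pi L^2 h_j^2 \cdot \#\big(\mathcal{M}_{h_j} \setminus \tilde{\mathcal{B}}_{h_j}(\lambda)\big)$.

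Next I would estimate the cardinality of the bad index set by combining two ingredients already established. The spectral count \eqref{windowsboundedcount}, together with the sharper Weyl-law computation preceding it, shows
\[
\#\big(\mathcal{B}_{h_j} \setminus \tilde{\mathcal{B}}_{h_j}(\lambda)\big) \leq \left(\tfrac{1}{\lambda} + o(1)\right) \#\mathcal{B}_{h_j},
\]
while Proposition \ref{goodtime} guarantees $\#\mathcal{B}_{h_j}/\#\mathcal{M}_{h_j} \to 1$ along our chosen sequence $h_j$. Together these give
\[
\#\big(\mathcal{M}_{h_j} \setminus \tilde{\mathcal{B}}_{h_j}(\lambda)\big) \leq \left(\tfrac{1}{\lambda} + o(1)\right) \#\mathcal{M}_{h_j}.
\]

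To finish, I would insert this into the covering bound and invoke the Weyl-type asymptotic $\#\mathcal{M}_{h_j} \sim (2\pi h_j)^{-2}\meas(E_\kappa(t))$ used throughout the section. The factor $h_j^2$ cancels, and after dividing through by $\meas(E_\kappa(t))$ one obtains the stated proportion. There is no real obstacle here beyond careful bookkeeping of constants: the main point is to trace the sharp $1/\lambda$ out of the pre-\eqref{windowsboundedcount} Weyl computation (rather than the convenient looser $2/\lambda$ used in the text) and combine it with the normalisation in the Weyl asymptotic for $\#\mathcal{M}_{h_j}$ so that $\pi L^2$ from the ball areas and the $(2\pi)^{-2}$ from the lattice density combine into exactly the constant $L^2/(\pi\lambda)$ claimed.
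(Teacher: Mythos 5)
Your proof is correct and follows essentially the same route as the paper's: cover the portion of $E_\kappa(t)$ not within $Lh_j$ of $\tilde{\mathcal{I}}_{h_j}(\lambda)$ by $Lh_j$-balls about actions $I_m$ with $m\in\mathcal{M}_{h_j}\setminus\tilde{\mathcal{B}}_{h_j}(\lambda)$, bound the cardinality of that index set via \eqref{windowsboundedcount} together with Proposition \ref{goodtime}, and insert the Weyl asymptotic $N_1(h_j)\sim(2\pi h_j)^{-2}\meas(E_\kappa)$. One small remark: your final bookkeeping (using the sharper $1/\lambda$ rather than the $2/\lambda$ in \eqref{windowsboundedcount}) actually yields $1-L^2/(4\pi\lambda)-o(1)$, not ``exactly'' $1-L^2/(\pi\lambda)$ as you assert, but since this is stronger than the stated bound the conclusion of course still follows.
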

\begin{proof}
	We have
	\begin{eqnarray}
	\nonumber& &\frac{\meas(\{I\in E_\kappa(t):\textrm{dist}(I,\tilde{\mathcal{I}}_{h_j}(\lambda))<Lh_j\})}{\meas(E_\kappa)}\\
	\nonumber&\geq& 1-\frac{\meas(\{I\in E_\kappa(t):\textrm{dist}(I,\mathcal{I}_{h_j}\setminus\tilde{\mathcal{I}}_{h_j}(\lambda))<Lh_j\})}{\meas(E_\kappa)}\\
	\nonumber&\geq & 1-\frac{1}{\meas(E_\kappa)}\cdot \# (\mathcal{M}_{h_j}\setminus\tilde{\mathcal{B}}_{h_j}(\lambda))\cdot \pi L^2 h_j^2\\
	\nonumber&=& 1-\frac{N_1(h_j)}{\meas(E_\kappa)}\cdot  \left(1-\frac{\#\tilde{\mathcal{B}}_{h_j}(\lambda)}{N_2(h_j)}\cdot\frac{N_2(h_j)}{N_1(h_j)}\right)\cdot \pi L^2 h_j^2\\
	&\geq & 1-\frac{N_1(h_j)}{\meas(E_{\kappa})}\cdot (1-(1-2\lambda^{-1})(1-\epsilon(h_j)))\cdot \pi L^2 h_j^2
	\end{eqnarray}
	from Proposition \ref{goodtime} and \eqref{windowsboundedcount}.
	
	Taking $j\rightarrow \infty$ and using the asymptotic 
	\begin{equation}
	N_1(h)\sim \frac{\meas(E_\kappa)}{(2\pi h)^2}
	\end{equation}
	completes the proof.
\end{proof}

Proposition \ref{mostactions} together with \eqref{largeinnerprod} are the key ingredients required to prove Theorem \ref{mainthm}.

\subsection{Proof of Theorem \ref{mainthm}}

Proposition \ref{mostactions} and the Borel--Cantelli lemma imply  that there exists a subset $\tilde{E}_\kappa(\lambda)\subset E_\kappa$ of the nonresonant actions of proportion at least $1-O(\lambda^{-1})$ that has the property that
\begin{equation}
I\in \tilde{E}_\kappa(\lambda) \Rightarrow \textrm{dist}(I,\tilde{\mathcal{I}}_{h_j}(\lambda))<Lh_j \textrm{ for infinitely many }j.
\end{equation}
For each $I_\omega\in \tilde{E}_\kappa(\lambda)$ and each $j$, we choose such an action in  $\tilde{A}_{h_j}$ and an associated quasimode $v_{m_j}$ for $\mathcal{P}_{h_j}$ in order to obtain a sequence of quasimodes that concentrates completely on the torus $\Lambda_\omega=\{I_\omega\}\times \T^{2} $.

For this sequence, we can find using Lemma~\ref{lem:spectral} a corresponding sequence of eigenfunctions $u_{k_j}$ for $\mathcal{P}_{h_j}$ such that 
\begin{equation}
\label{largeinnerproduct2}
|\langle u_{k_j}(h_j),v_{m_j}(h_j) \rangle|>\frac{1}{2\lambda R}
\end{equation}
for all sufficiently large $j$.

We now claim that the sequence $u_{k_j}(h_j)$ scars on the torus $\Lambda_\omega$. To see this, we take 
an arbitrary semiclassical pseudodifferential operator $A_h$ with compactly supported symbol equal to $1$ in a neighbourhood of the torus $\Lambda_\omega$, and estimate 
\begin{eqnarray}
\nonumber\langle A_{h_j}^2u_j(h_j),u_j(h_j)\rangle \nonumber&=&\|A_{h_j}u_j(h_j)\|^2\\
\nonumber&\geq & |\langle A_{h_j}u_j(h_j),v_{m_j}(h_j)\rangle|^2\\
\nonumber&=& |\langle u_j(h_j),v_{m_j}(h_j)\rangle +\langle u_j(h_j),(A_{h_j}-\textrm{Id}) v_{m_j}(h_j)\rangle|^2\\
&\geq & \frac{1}{5\lambda^2 R^2}
\end{eqnarray}
for sufficiently large $j$, from \eqref{largeinnerproduct2} and the concentration of $v_{m_j}$ onto the torus $\Lambda_\omega$.

Now let $\nu$ be a semiclassical measure associated to a subsequence of the $u_{k_j}(h_j)$. We see that 
$$
\int \sigma(A) \, d\mu 
$$
is bounded below by $(5 \lambda^2 R^2)^{-1}$. 
By taking $A$ to have shrinking support in a neighbourhood of $\Lambda_\omega$, we see that $\nu$ has mass at least $(5 \lambda^2 R^2)^{-1}$ on $\Lambda_\omega$. 

Applying this argument with $\lambda \rightarrow \infty $ we establish the existence of such semiclassical measures for almost all $I_\omega \in E_\kappa$ and we are done.

One can apply Theorem \ref{mainthm} with $\kappa \rightarrow 0$ to obtain the following corollary.

\begin{corollary}
	\label{maincor}
	Under the same assumptions as Theorem \ref{mainthm}, for almost all nonresonant frequencies $\omega\in \cup_{\kappa>0} E_\kappa$, there exists a $t_0(\omega)>0$ such that for almost all $t\in (0,t_0)$ there exists a semiclassical measure associated to the eigenfunctions of $P_h(t)$ that has positive mass on $\Lambda_\omega$.
\end{corollary}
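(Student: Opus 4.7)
The plan is to exhaust $\bigcup_{\kappa > 0} E_\kappa$ by the countable increasing family $\{E_{1/k}\}_{k \in \mathbb{N}}$, apply Theorem~\ref{mainthm} at each $\kappa = 1/k$, swap the two ``for almost all'' quantifiers using Fubini, and finally take a countable union of null sets.

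First, I would fix $\tau = 2$ as in Theorem~\ref{mainthm} and, for each $k \in \mathbb{N}$, apply the theorem with KAM constant $\kappa = 1/k$. This produces thresholds $t_0(k) > 0$ (allowed to decay to $0$ as $k \to \infty$) such that the joint exceptional set
\begin{equation*}
N_k := \bigl\{(t, I_\omega) \in (0, t_0(k)) \times E_{1/k} : \text{no eigenfunction-sequence of } \mathcal{P}_h(t) \text{ scars on } \Lambda_\omega\bigr\}
\end{equation*}
has zero three-dimensional Lebesgue measure. This is the direct content of Theorem~\ref{mainthm} read as an iterated null-set statement, upgraded to a joint statement by Tonelli once one observes that $N_k$ is measurable: its defining condition is assembled from countable unions/intersections of the measurable conditions on $(t, I_\omega)$ appearing in Section~\ref{mainsec3} (the sets $A_m$, $B_m$, $\tilde{\mathcal{B}}_{h_j}(\lambda)$, and the inner-product estimate \eqref{largeinnerproduct2}). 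Slicing $N_k$ in the opposite order via Fubini then gives a null set $C_k \subset E_{1/k}$ such that every $I_\omega \in E_{1/k} \setminus C_k$ has scarring on $\Lambda_\omega$ for a.a.\ $t \in (0, t_0(k))$.

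Finally, since $E_\kappa$ is monotone increasing as $\kappa \downarrow 0$, we have $\bigcup_{\kappa > 0} E_\kappa = \bigcup_{k} E_{1/k}$, and $C := \bigcup_{k} C_k$ is a countable union of null sets, hence null. For each $\omega$ with $I_\omega \in \bigcup_k E_{1/k} \setminus C$, let $k(\omega)$ be the least integer with $I_\omega \in E_{1/k(\omega)}$ and set $t_0(\omega) := t_0(k(\omega))$; the conclusion of the corollary then follows immediately from the defining property of $C_{k(\omega)}$. The only point requiring care is the joint measurability of $N_k$ needed to invoke Fubini, and the possibility that $t_0(k) \to 0$; both are benign, the first by construction and the second because $t_0$ in the corollary is allowed to depend on $\omega$. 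No other genuine obstacle is anticipated.
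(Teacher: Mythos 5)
Your proof takes essentially the same approach as the paper — exhaust $\bigcup_{\kappa>0}E_\kappa$ by the countable family $E_{1/k}$ and apply Theorem~\ref{mainthm} at each level — except that the paper compresses the whole argument into the single sentence ``apply Theorem~\ref{mainthm} with $\kappa\to 0$,'' leaving the quantifier swap implicit. You are right that the swap is not automatic: the theorem produces ``$\exists\,t_0$, for a.a.\ $t$, for a.a.\ $\omega$'' while the corollary asserts ``for a.a.\ $\omega$, $\exists\,t_0(\omega)$, for a.a.\ $t$,'' and passing from one iterated null-set statement to the other requires Fubini, which in turn requires joint measurability of the exceptional set in $(t,I_\omega)$. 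Your flag of this as the only nontrivial point is correct, and it does hold: every object in Section~\ref{mainsec} ($\mu_m(t;h)$, $A_m$, $B_m$, $\mathcal{G}$, $\tilde{\mathcal{B}}_{h_j}(\lambda)$, $\tilde E_\kappa(\lambda)$) is built from smooth functions of $t$ and $I$ via countable unions, intersections, and $\liminf$'s, so the joint bad set is Borel. Your observation that $t_0$ is allowed to depend on $\omega$ (via $k(\omega)$) is exactly what the corollary's weaker conclusion buys, and the countable union over $k$ closes the argument. In short: correct, same route as the paper, and more careful.
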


\section{Remarks on $\mathcal{C}^\infty$ case}

In the present article, the assumption was made that $P(x,\xi;t)\in G^{\rho,\rho,1}$ and that $M$ is a $G^\rho$ class manifold. 
This choice was made because to the authors knowledge, there does not appear to be any direct analogue of classical Birkhoff normal form (See \cite[Corollary~1.2]{popovkam}) in the literature for KAM systems that are $\mathcal{C}^\infty$ perturbations of $\mathcal{C}^\infty$ completely integrable systems. 
However, under the assumptions of the existence of such a Birkhoff normal form, a quantum Birkhoff normal form was obtained in the $\mathcal{C}^\infty$ setting by Colin de Verdi\`{e}re \cite{colin},  with the symbols $K^0,R^0$ of $\mathcal{C}^\infty$ regularity and quasimodes having $O(h^\infty)$ error terms. 
As we only require $O(h^{\gamma+1})$ quasimodes for the argument in this paper, the proof of Theorem \ref{mainthm} goes through in the $\mathcal{C}^\infty$ case in exactly the same manner.

\begin{theorem}
	\label{smooththm}
	Suppose $M$ is a compact boundaryless $\mathcal{C}^\infty$ surface, and suppose that $\mathcal{P}_h(t)$ is a family of self-adjoint elliptic semiclassical pseudodifferential operators of fixed positive differential order $m$, such that 
	\begin{itemize}
		\item The operator $\mathcal{P}_h(t)$ has full symbol real-valued, smooth in $t$, and in the standard  Kohn--Nirenberg symbol class;
		\item The principal symbol of $\mathcal{P}_h(t)$ is given by some  $P(x,\xi;t)\in \mathcal{C}^\infty(T^*M\times (-1,1))$;
		\item The Hamiltonian $P^0(x,\xi):=P(x,\xi;0)$ is, in some open set of phase space $T^* M$, non-degenerate and completely integrable; 
		\item Written in action-angle coordinates $(\theta, I) \in \T^n \times D$ for the Hamiltonian $P^0$, the vector fields 
		\begin{multline}
		\nabla_I H^0(I) \text{ and } \nabla_I \left(\int_{\T^2}\partial_t H(\theta,I;0)\, d\theta\right) \\ \text{ are linearly independent for $I\in D$ and all $h<h_0$,}
		\label{li-cond}\end{multline}
		where $H(\theta,I;t)$ denotes $P(x,\xi;t)$ written in the action-angle coordinates for $P^0$, and $H^0(I):=H(\theta,I;0)$.
	\end{itemize}
	
	Then there exists $t_0>0$ such that for almost all $t \in [0,t_0]$, and for almost all KAM tori $\Lambda_\omega=\T^n\times \{I_\omega\}$ with $\omega\in\Omega_\kappa$, there exists a semiclassical measure associated to the eigenfunctions of $\mathcal{P}_h(t)$ 
	%	\begin{equation}
	%	\mathcal{P}_h(t)=h^2\Delta_g+V(x)+tQ(x,hD)
	%	\end{equation} 
	that has positive mass (and hence scars) on $\Lambda_\omega$.
\end{theorem}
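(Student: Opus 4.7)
The plan is to follow the proof of Theorem \ref{mainthm} in Section \ref{mainsec} essentially verbatim, with the only substantive change being to replace Popov's Gevrey quantum Birkhoff normal form (Theorem \ref{main2}) by its $\mathcal{C}^\infty$ counterpart due to Colin de Verdi\`ere \cite{colin}. The key observation, already flagged in the paragraph preceding the statement, is that nowhere in Section \ref{mainsec} is the exponentially small quasimode error actually exploited: the entire argument uses only finite-order quasimodes with error $O(h^{\gamma+1})$ for some fixed $\gamma > 7/2$, together with smoothness of the truncated normal form symbol $K^0(I,t;h)$ jointly in $(I,t)$ and uniform control of a fixed finite number of $t$-derivatives. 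All these properties survive the relaxation to the $\mathcal{C}^\infty$ symbol class.

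First I would invoke the (hypothesised) existence of action-angle coordinates for $P^0$ and a $\mathcal{C}^\infty$ classical Birkhoff normal form for $H(\theta,I;t)$ in a neighbourhood of the nonresonant set $E_\kappa(t)$. Given this input, Colin de Verdi\`ere's construction yields a family of semiclassical Fourier integral operators $U_h(t)$ conjugating $\mathcal{P}_h(t)$, up to a remainder of order $O(h^\infty)$, to an operator with symbol $p^0(\theta,I;t,h) = K^0(I;t,h) + R^0(\theta,I;t,h)$, where $K^0, R^0 \in \mathcal{C}^\infty$, the remainder $R^0$ and all its $I$-derivatives are flat on $\T^n \times E_\kappa(t)$, and the construction depends smoothly on $t$ (the $t$-smoothness is handled as in Section 4 of \cite{nqe}). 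Truncating the semiclassical expansions of $K^0$ and $R^0$ at order $h^\gamma$ produces quasimodes $v_m(t,h) = U_h(t) e_m$ with quasieigenvalues $\mu_m(t,h) = K^0(I_m;t,h)$ and error $O(h^{\gamma+1})$, precisely of the form used in the Gevrey argument.

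Next I would verify that Propositions \ref{geoprop}, \ref{nonresonancenonconc}, and \ref{separation} go through line by line. The geometric proposition only uses Taylor expansion of the smooth leading symbol $K_0$ together with the linear independence hypothesis \eqref{li-cond}; Proposition \ref{nonresonancenonconc} uses this together with the Diophantine bound \eqref{nonres}, with $O(h^2)$ remainders controlled by smoothness of finitely many subleading terms $K_j$; and Proposition \ref{separation} is a Taylor expansion argument in $t$ using the uniform bounds on $\partial_t K^0$. The averaging arguments of Propositions \ref{nonconc1}, \ref{goodtime}, and \ref{mostactions} are purely combinatorial and measure-theoretic and make no reference whatsoever to the symbol class. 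Consequently, for almost every $t \in (0, t_0)$ there is a sequence $h_j \to 0$ along which the quasieigenvalues $\mu_m(t, h_j)$ are $h_j^\gamma$-isolated for a proportion $1 - o(1)$ of indices $m \in \mathcal{M}_{h_j}(t)$.

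Finally, the concluding spectral projector estimate of Lemma \ref{lem:spectral} and the scarring argument of Section \ref{mainsec3} use only the $O(h^{\gamma+1})$ quality of the quasimode and the self-adjoint spectral calculus of $\mathcal{P}_h(t)$, so the lower bound $|\langle u_{k_j}, v_{m_j} \rangle| > (2\lambda R)^{-1}$ and the resulting positive mass on $\Lambda_\omega$ are obtained in exactly the same fashion; letting $\lambda \to \infty$ then covers almost every $I_\omega \in E_\kappa$. The only genuine obstacle is the first step: at present there does not appear to be an off-the-shelf $\mathcal{C}^\infty$ classical Birkhoff normal form for $\mathcal{C}^\infty$ perturbations of completely integrable systems in the literature, so this piece of input has to be assumed (or supplied by other means). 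Once it is granted, the remainder of the proof is a direct transcription of the Gevrey argument with the same constants and estimates.
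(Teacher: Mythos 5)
Your proposal is essentially the paper's own argument: Section 4 of the paper reduces Theorem \ref{smooththm} to Theorem \ref{mainthm} in exactly this way, replacing Popov's Gevrey quantum Birkhoff normal form with Colin de Verdi\`ere's $\mathcal{C}^\infty$ version, observing that only $O(h^{\gamma+1})$ quasimodes are needed for the Section 3 machinery, and flagging (as you do) that the $\mathcal{C}^\infty$ classical Birkhoff normal form must be assumed since it does not appear to be available in the literature. You have correctly identified both the structure of the reduction and the genuine input that has to be granted, so there is nothing substantive to add.
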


%\section{Remarks on full concentration}
%Theorem \ref{mainthm} seems non-optimal, since we have not exploited the fact that by covering $T^*M$ with such coordinate patches, we obtain a family of invariant Lagrangian  tori that fill up a large proportion of \emph{all} of phase space.

%The main obstruction to working globally seem to be proving $O(h^\gamma)$-separation for quasi-eigenvalues from different patches.

%One proposed way around this is the use of second $t$-derivatives to separate the finite but uniformly bounded number of quasi-eigenvalues that are close in energy and speed. 

%Technical considerations include the existence of degenerate invariant tori of dimension $<n$.

\section{Examples}

\subsection{The flat torus}
A fundamental example of a nondegenerate completely integrable system is the flat torus, $\T^2 = \R^2/2\pi \Z^2$ with the standard metric. If we denote the spatial coordinates by $(\theta_1, \theta_2)$ and their dual coordinates by $I_1, I_2$ then these form action-angle coordinates and the symbol of the Laplacian takes the form $I_1^2 + I_2^2$ which is obviously nondegenerate. Our result applies in particular to metric perturbations of the flat torus, provided that \eqref{li-cond} holds, which it clearly does locally for a generic perturbation. For example, if we take $H(\theta, I;t) = I_1^2+I_2^2+t\cos^2 \theta_1 I_1 I_2$ then this satisfies condition \eqref{li-cond} whenever $I_1 \neq I_2$. 

Another standard example of a completely integrable system is geodesic flow on an surface of revolution. In the case of the ellipsoid, this was checked to be nondegerate in \cite{ellipsoid}. Generic metric perturbations, or potential perturbations, of this metric can similarly be treated. 

\subsection{Spherical pendulum} Geodesic flow on the 2-sphere is certainly completely integrable, but it is degenerate, as every orbit is periodic. However, if we add to this system a potential which is the height function in the standard embedding into $\R^3$, then the system is still rotationally invariant, hence completely integrable, but now nondegenerate, as shown in \cite{pendulum}. Metric or potential perturbations of this system fall into the framework of this paper, provided that condition \eqref{li-cond} is satisfied locally. 

\subsection{Central potentials} Another standard completely integrable system is that of central potentials on $\R^2$, that is, the system on $T^* \R^2$ with Hamiltonian 
$$
h(x, y, \xi, \eta) = \xi^2 + \eta^2 + V(\sqrt{x^2 + y^2}).
$$
Again this is rotationally invariant and therefore completely integrable. The corresponding operator is the Schr\"odinger operator $h^2\Delta + V$. Although this is on a noncompact manifold, if $V(r) \to \infty$ as $r \to \infty$, then this operator has discrete spectrum and the phase space corresponding to any energy interval $[0, E]$ is compact, so our results apply directly. The system is nondegenerate for generic $V$. This follows from \cite[Chapter 2, Section 1.1]{arnoldcelestial},
where explicit formulae for the period $\tau$ between pericentre and apocentre, and for the aspidal angle $\Phi$, are given. Nondegeracy is equivalent to the condition that $(\tau, \Phi)$ are nondegenerate functions of the angular momentum $c$ and energy $h$, and it is straightforward to check from these explicit formulae that this is true after a generic perturbation of the potential. Our theorem applies for example to compactly supported (or decaying at infinity) potential perturbations satisfying \eqref{li-cond}. 

\appendix
\section{Gevrey class symbols and Birkhoff normal form}
\label{gevsymbsec}
In this appendix we collect the basic definitions of Gevrey functions and Gevrey symbol classes.

Let $D$ be a bounded domain in $\R^n$ and by let $X$ be either a bounded domain in $\R^n$ or the compact set $\T^n$.

\begin{defn}
\label{anisgevdef}
For $\rho,\rho'>1$, the anisotropic Gevrey class $G^{\rho,\rho'}(X\times D)$ is defined as the set of ${u\in \mathcal{C}^\infty(X\times D)}$ with
\begin{equation}
\sup_{\alpha,\beta}\sup_{(\theta,I)} |\partial_\theta^\alpha \partial_I^\beta u|L_1^{-\alpha}L_2^{-\beta}\alpha!^{-\rho}\beta!^{-\rho'}<\infty
\end{equation}
for some $L_1,L_2>0$.
\end{defn}
This definition can be extended for functions with more than two differing degrees of Gevrey regularity in the obvious manner.

We now fix the parameters $\sigma,\mu>1$ and $\varrho\geq \sigma+\mu-1$, and denote the triple $(\sigma,\mu,\varrho)$ by $\ell$.

\begin{defn}
	\label{formalsymboldefn}
	A formal Gevrey symbol on $X\times D$ is a formal sum
	\begin{equation}
	\label{formalsymbol}
	\sum_{j=0}^\infty p_j(\theta,I)h^j
	\end{equation}
	where the $p_j\in\mathcal{C}_0^\infty(X\times D)$ are all supported in a fixed compact set and there exists a $C>0$ such that
	\begin{equation}
	\sup_{X\times D} |\partial_\theta^\beta \partial_I^\alpha p_j(\theta,I)|\leq C^{j+|\alpha|+|\beta|+1}\beta!^\sigma\alpha!^\mu j!^\varrho.
	\end{equation}
\end{defn}

\begin{defn}
	A resummation of the formal symbol \eqref{formalsymbol} is a function $p(\theta,I;h)\in\mathcal{C}_0^\infty(X\times D)$ for $0<h\leq h_0$ with
	\begin{equation}
	\sup_{X\times D \times (0,h_0]} \left|\partial_\theta^\beta \partial_I^\alpha \left(p(\theta,I;h)-\sum_{j=0}^N  p_j(\theta,I)h^j\right)\right|\leq h^{N+1}C_1^{N+|\alpha|+|\beta|+2}\beta!^\sigma\alpha!^\mu (N+1)!^\varrho.
	\end{equation}
\end{defn}

\begin{lem}
	Given a formal symbol \eqref{formalsymbol}, one choice of resummation is
	\begin{equation}
	p(\theta,I;h):= \sum_{j\leq \epsilon h^{-1/\varrho}} p_j(\theta,I)h^j
	\end{equation}
	where $\epsilon$ depends only on $n$ and $C_1$.
\end{lem}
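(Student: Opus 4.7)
The plan is to set $N(h) := \lfloor \epsilon h^{-1/\varrho}\rfloor$ with $\epsilon > 0$ to be fixed at the end, and observe that the remainder
\[
R_N(\theta, I; h) := p(\theta, I; h) - \sum_{j=0}^N p_j(\theta, I) h^j
\]
equals $\sum_{j=N+1}^{N(h)} p_j h^j$ when $N < N(h)$, vanishes when $N = N(h)$, and equals $-\sum_{j=N(h)+1}^N p_j h^j$ when $N > N(h)$. I would then split into the two nontrivial cases. Since the Gevrey bound in Definition \ref{formalsymboldefn} on $\partial_\theta^\beta \partial_I^\alpha p_j$ has exactly the same $j$-dependence as the bound on $p_j$ itself, the derivative estimates reduce in identical fashion to the pointwise estimate of $|R_N|$, so I would focus the analysis on the pointwise bound.

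In the low-$N$ case ($N < N(h)$), the heart of the argument is to compare $j!^\varrho h^j$ to the target $(N+1)!^\varrho h^{N+1}$ through
\[
\frac{j!^\varrho h^j}{(N+1)!^\varrho h^{N+1}} = \big((N+2)(N+3)\cdots j\big)^\varrho h^{j-N-1}.
\]
Using $(N+2)\cdots j \leq j^{j-N-1}$ together with $j \leq N(h) \leq \epsilon h^{-1/\varrho}$, this ratio is dominated by $\epsilon^{\varrho(j-N-1)}$. Combined with the factor $C^{j-N-1}$ coming from the Gevrey bound on $p_j$, the residual series becomes geometric with common ratio $C\epsilon^\varrho$ and is therefore summable provided $\epsilon$ is chosen so that $C\epsilon^\varrho \leq 1/2$. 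This yields the desired resummation estimate in Case A with, say, $C_1 = 2C$.

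In the high-$N$ case ($N > N(h)$), I would use the dual comparison
\[
\frac{j!^\varrho h^j}{(N+1)!^\varrho h^{N+1}} = \frac{1}{\big((j+1)(j+2)\cdots(N+1)\big)^\varrho h^{N+1-j}},
\]
noting that every factor in the product exceeds $\epsilon h^{-1/\varrho}$, so the ratio is at most $\epsilon^{-\varrho(N+1-j)}$. Writing $\ell = N+1-j$ and tracking the remaining constants, the per-term ratio between $|p_j h^j|$ and the target bound becomes $(C/C_1)^{N+2+|\alpha|+|\beta|}(C\epsilon^\varrho)^{-\ell}$, and the sum in $\ell$ is geometric with ratio $(C\epsilon^\varrho)^{-1}$. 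Balancing the $N$-dependent growth $(C\epsilon^\varrho)^{-N}$ against the decay $(C/C_1)^N$ forces $C_1 \geq \epsilon^{-\varrho}$, after which the remaining coefficient $(C\epsilon^\varrho)^{N(h)}$ actually vanishes exponentially in $h^{-1/\varrho}$ as $h \to 0$, so the Case B bound is far stronger than required.

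The main obstacle is reconciling the two cases with a single choice of parameters: Case A demands $\epsilon$ small so the tail decays geometrically, while Case B demands $C_1\epsilon^\varrho \geq 1$ so the overshoot past the optimal truncation point is absorbed by the $C_1^N$ in the target. The resolution is to first choose $\epsilon$ so that $C\epsilon^\varrho \leq 1/2$ and then take $C_1 := \max(2C, \epsilon^{-\varrho})$; with this choice both geometric sums converge uniformly in $N$, yielding the required resummation bound with constants depending only on $C$ and $\varrho$ (and hence only on $n$ and $C_1$ once $C_1$ is regarded as the primary parameter).
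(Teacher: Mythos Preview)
The paper does not actually prove this lemma; it is stated without proof in the appendix as a standard fact about Gevrey resummation (cf.\ Popov's papers cited there). Your argument is a correct and self-contained verification: the case split at the optimal truncation level $N(h)=\lfloor \epsilon h^{-1/\varrho}\rfloor$, the geometric-series control of the tail in Case~A via $j^\varrho h\le \epsilon^\varrho$, the dual estimate in Case~B via $(j+1)^\varrho h\ge \epsilon^\varrho$, and the final choice $C\epsilon^\varrho\le 1/2$ together with $C_1\ge \epsilon^{-\varrho}$ all go through exactly as you describe. The observation that the derivative bounds factor through the pointwise bound (since the $\alpha,\beta$ dependence in Definition~\ref{formalsymboldefn} is multiplicative and $j$-independent) is also correct and saves repeating the computation.

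One cosmetic point: the lemma as stated says $\epsilon$ depends on ``$n$ and $C_1$'', but your proof (correctly) shows that $\epsilon$ is determined by the formal-symbol constant $C$ and the exponent $\varrho$, with $C_1$ then determined by $\epsilon$; the dimension $n$ does not enter. This is almost certainly a typo in the paper rather than an issue with your argument.
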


\begin{defn}
	We define the residual class of symbols $S_\ell^{-\infty}$ as the collection of realisations of the zero formal symbol.
\end{defn}

Writing $f\sim g$ if $f-g\in S_\ell^{-\infty}$, it then follows that any two resummations of the same formal symbol are $\sim$-equivalent. Gevrey symbols are precisely the equivalence classes of $\sim$.

\begin{defn}
	\label{selldef}
	We denote the set of equivalence classes by $S_\ell(X\times D)$.
\end{defn}

We can now introduce the pseudodifferential operators corresponding to these symbols.
\begin{defn}
	\label{gevpseudo}
	To each symbol $p\in S_\ell(X\times D)$, we associate a semiclassical pseudodifferential operator defined by
	\begin{equation}
	(2\pi h)^{-n}\int_{X\times \mathbb{R}^n}e^{i(x-y)\cdot \xi/h}p(x,\xi;h)u(y)\, d\xi\, dy.
	\end{equation}
	for $u\in \mathcal{C}_0^\infty(X)$.
\end{defn}

The above construction is defined modulo $\exp(-ch^{-1/\varrho})$, as for any $p\in S_\ell^{-\infty}(X\times D)$ we have
\begin{equation}
\|P_hu\|=O_{L^2}(\exp(-ch^{-1/\varrho}))
\end{equation}
for some constant $c>0$.
\begin{remark}
	The exponential decay of residual symbols is a key strengthening that comes from working in a Gevrey symbol class, as opposed to the standard Kohn--Nirenberg classes.
\end{remark}

An important feature of the Gevrey symbol calculus is that the symbol class $S_\ell(X\times D)$ is closed under composition. 

We conclude by noting that if $p\in S_{(\sigma,\sigma,2\sigma-1)}$, then $G^\sigma$ changes of variable preserve the symbol class of $p$. This coordinate invariance allows us to extend the Gevrey pseudodifferential calculus to compact Gevrey manifolds.

\bibliographystyle{plain}
\bibliography{kam.positive.mass}
\end{document}